\documentclass[a4paper, 10pt, twoside]{article}

\usepackage{amsmath, amscd, amsfonts, amssymb, amsthm, latexsym, url, color, todonotes, bm, framed, rotating, enumerate} 
\usepackage{graphicx}
\usepackage[left=1in, right=1in, top=1.2in, bottom=1in, includefoot, headheight=13.6pt]{geometry}
\usepackage{mathtools}
\input{xy}
\xyoption{all}

\setlength{\marginparwidth}{2cm}
\setlength{\marginparsep}{1mm}

\usepackage[colorlinks,breaklinks=true]{hyperref}
\usepackage[figure,table]{hypcap}
\hypersetup{
	bookmarksnumbered,
	pdfstartview={FitH},
	citecolor={black},
	linkcolor={black},
	urlcolor={black},
	pdfpagemode={UseOutlines}
}
\makeatletter
\newcommand\org@hypertarget{}
\let\org@hypertarget\hypertarget
\renewcommand\hypertarget[2]{%
  \Hy@raisedlink{\org@hypertarget{#1}{}}#2%
} 
\makeatother 

\setlength{\marginparwidth}{0.8in}

\newtheorem{theorem}{Theorem}[section]
\newtheorem{lemma}[theorem]{Lemma}
\newtheorem{Key lemma}[theorem]{Key lemma}
\newtheorem{corollary}[theorem]{Corollary}
\newtheorem{proposition}[theorem]{Proposition}

\theoremstyle{definition}
\newtheorem{property}[theorem]{Property}
\newtheorem{definition}[theorem]{Definition}
\newtheorem{remark}[theorem]{Remark}
\newtheorem{example}[theorem]{Example}

\renewcommand{\labelenumi}{(\roman{enumi})}

\newcommand{\xysquare}[8]{
\[\xymatrix{
#1 \ar@{#5}[r] \ar@{#6}[d] & #2 \ar@{#7}[d]\\
#3 \ar@{#8}[r] & #4
}\]
}

\newcommand{\bb}{\mathbb}

\newcommand{\comment}[1]{}

\renewcommand{\phi}{\varphi}

\newcommand{\roi}{\mathcal{O}}

\newcommand{\sub}[1]{{\mbox{\rm \scriptsize #1}}}

\newcommand{\To}{\longrightarrow}

\newcommand{\xto}{\xrightarrow}

\renewcommand{\cal}{\mathcal}
\renewcommand{\hat}{\widehat}

\renewcommand{\tilde}{\widetilde}

\renewcommand{\ker}{\operatorname{Ker}}

\DeclareMathOperator{\Char}{char}

\DeclareMathOperator{\Frac}{Frac}

\DeclareMathOperator{\Pic}{Pic}

\DeclareMathOperator{\Spec}{Spec}

\newcommand{\CH}{C\!H}



\usepackage[bbgreekl]{mathbbol}
\DeclareSymbolFontAlphabet{\mathbbm}{bbold}

\usepackage{fancyhdr}

\pagestyle{fancy}
\fancyhead{}
\fancyfoot[C]{\thepage}
\fancyhead[EL, OR]{}
\fancyhead[EC]{Morten L\"uders}
\fancyhead[OC]{Bloch-Ogus theory in mixed characteristic}

\usepackage{sectsty}
\sectionfont{\Large\sc\centering}
\chapterfont{\large\sc\centering}
\chaptertitlefont{\LARGE\centering}
\partfont{\centering}

\begin{document}
\itemsep0pt

\title{Bloch-Ogus theory for smooth and semi-stable schemes in mixed characteristic}

\author{Morten L\"uders}

\date{}

\maketitle

\begin{abstract}
We study Bloch-Ogus theory and the Gersten conjecture for homology theories with duality satisfying certain properties, in particular for \'etale cohomology with finite coefficients coprime to the residue characteristic of the base, for smooth and semi-stable schemes in mixed characteristic. We prove the Gersten conjecture in the smooth case and prove a special case in the semi-stable situation. As a corollary of the smooth case we obtain the surjectivity of the Galois symbol map for arbitrary local rings over an excellent discrete valuation ring.
\end{abstract}


\section{Introduction}

Let $S$ be the spectrum of an excellent discrete valuation ring $\cal O_K$ with local parameter $\pi$, field of fractions $K$ and residue field $k$. 
For any scheme $X$ of finite type over $S$, one can define the concept of a homology theory with duality $(H_*,H^*)$, that is a pair of a homology and a cohomology theory satisfying certain axioms, for example Poincar\'e duality (see Section \ref{section_homology_theories}). Let $\cal H^*(n)$ be the sheafification of the presheaf $U\mapsto H^*(U,n)$ on $X$ in the Zariski topology. For the sheaf $\cal H^*(n)$, one can formulate the Gersten conjecture, which says that the sequence of sheaves
\[0\to \cal H^q(n) \to \bigoplus_{x\in X^{(0)}}i_{*,x}H^{q}(x,n)\to \bigoplus_{x\in X^{(1)}}i_{*,x}H^{q-1}(x,n-1)\to...\to \bigoplus_{x\in X^{(d)}}i_{*,x}H^{q-d}(x,n-d)\to 0 \]
is exact for all $n,q \in \bb N$ if $X$ is regular of dimension $d$. If this is the case, then we say that the Gersten conjecture holds for $\cal H$ in the Zariski topology.
Our main theorem reads as follows:
\begin{theorem}(Thm. \ref{theorem_smooth_case})\label{theorem_intro}
Let $S$ and $X$ be as above and $(H_*,H^*)$ a homology theory with duality satisfying the principal triviality Property \ref{property_local_principality} and lifting Property \ref{property_lifting}. 
 If $X$ is smooth over $S$, then the Gersten conjecture holds for $\cal H$ in the Zariski topology.
\end{theorem}
The main example to which Theorem \ref{theorem_intro} applies is that of \'etale cohomology with finite coefficients coprime to $\mathrm{char}(k)$ (see Section \ref{subsection_example_etale_cohom}). 
Combining Theorem \ref{theorem_intro} for this example with the main result of \cite{Lu20}, we obtain the following corollary:
\begin{corollary}[Cor. \ref{corollary_galois_symbol}]
Let $\cal O_K$ be an excellent discrete valuation ring with residue field $k$ and $R$ be a local $\cal O_K$-algebra. Let $\ell\in \bb N_{>0}$ be coprime to $\mathrm{char}(k)$. Then the Galois symbol map
$$\hat{K}^M_q(R)/\ell\longrightarrow H^q_\sub{\'et}(R,\mu_\ell^{\otimes q} )$$
is surjective. In particular, the graded ring $\bigoplus_{q\in \bb N}H^q_\sub{\'et}(R,\mu_\ell^{\otimes q} )$ is generated in degree $1$.
\end{corollary}

Our main theorem in the semi-stable situation is the following:
\begin{theorem}\label{theorem_semi_stable_intro}(Thm. \ref{theorem_semi_stable})
Let $\cal O_K$ be a henselian discrete valuation ring with local parameter $\pi$ and residue field $k$. Let $(H_*,H^*)$ be a homology theory with duality satisfying the blow-up Property \ref{property_blowup_formula}, the principal triviality Property \ref{property_local_principality} and the lifting Property \ref{property_lifting}. 
Let 
$$X=\Spec(\roi_K[t_1,t_2]/(t_1^{} t_{2}^{}-\pi)).$$
Let $\cal H^*(n)$ be the sheafification of the presheaf $U\mapsto H^*(U,n)$ on $X$ in the Zariski topology. Assume that for every open subset $W\subset X$ containing $\Spec(\roi_K[t_1,...,t_d]/(t_1^{}t_{2}^{}-\pi,t_1))\cong \bb A^1_k$ the morphism $ \bb A^1_k\to W$ is homologically effaceable at the origin. Then the complex of sheaves
\begin{equation*}
0\to \cal H^q(n) \to \bigoplus_{x\in X^{(0)}}i_{*,x}H^{q}(x,n)\xto{} \bigoplus_{x\in X^{(1)}}i_{*,x}H^{q-1}(x,n-1)\xto{} \bigoplus_{x\in X^{(2)}}i_{*,x}H^{q-2}(x,n-2)\to 0 
\end{equation*}
is exact.
\end{theorem}

Theorem \ref{theorem_intro} has a long history.
In the case that $S$ is a field and $X$ is smooth over $S$, Theorem \ref{theorem_intro} is due to Bloch and Ogus \cite{BO74} who use ``Quillen's trick'' in their proof, an idea based on Noether normalisation which Quillen used in \cite{Qu72} to prove the Gersten conjecture for algebraic K-theory for smooth schemes over fields. The theorem of Bloch and Ogus has been studied further by Colliot-Th\'el\`ene, Hoobler and Kahn in \cite{CHK97} where they give a different proof replacing ``Quillen's trick'' by Gabber's geometric presentation lemma which allows further applications. They therefore call the theorem the ``Bloch-Ogus-Gabber theorem''. The case of the Gersten conjecture for \'etale cohomology with finite coefficients coprime to $\mathrm{char}(k)$ for a general regular scheme in equal characteristic can be reduced to the smooth case by a method developed by Panin (see \cite[Sec. 5]{Pa03}). In mixed characteristic the case of $X$ smooth over $S$ has been proved in the Nisnevich topology, assuming additionally that $k$ is infinite, by Schmidt and Strunk for \'etale cohomology with finite coefficients coprime to $\mathrm{char}(k)$ in \cite{SS19}, using their version of Gabber's geometric presentation lemma in mixed characteristic from \cite{SS18}. Theorem \ref{theorem_intro} improves their result to the Zariski topology and arbitrary residue fields. The main difference is that we make use of ``Quillen's trick'' and follow ideas of Gillet and Levine \cite{GL87}. The general case of the Gersten conjecture for a regular scheme in mixed characteristic is wide open for \'etale cohomology with finite coefficients coprime to $\mathrm{char}(k)$ but also for any other theory for which one would expect it to hold; examples include motivic cohomology, K-theory and Milnor K-theory. This mixed characteristic case would, however, have many applications, for example to the (contravariant) functoriality of motivic cohomology in the Chow group and higher zero-cycle range (see \cite{KEW16} and Remark \ref{remark_introduction} below). Theorem \ref{theorem_semi_stable_intro} is a first step in this direction. The method we develop in the proof, and which so far only works in relative dimension one, is to use blow-ups and an induction on the number of irreducible components of the special fiber. This is inspired by \cite[Sec. 3]{Hyodo1988}. 


\begin{remark}[Functoriality in the semi-stable case]\label{remark_introduction}
The Gersten conjecture in the Zariski or Nisnevich topology for \'etale cohomology with finite coefficients coprime to $\mathrm{char}(k)$ for a regular scheme $X$ flat over $S$ as above with special fiber $X_1$ a simple normal crossing divisor would imply the Gersten conjecture for the improved Milnor K-sheaf $\hat{\cal K}^M_{q,X,\mathrm{Nis}}/m$, with $m$ coprime to $\Char(k)$, in the Nisnevich topology. Indeed, the sheaf $\hat{\cal K}^M_{q,X,\mathrm{Nis}}/m$ identifies with the sheaf $\cal H^q(\mu^{\otimes q}_m)$ for henselian local rings: Milnor K-theory is rigid by Proposition \ref{proposition_rigidity_KM} and \'etale cohomology is rigid by proper base change; passing to a bigger residue field and using a standard norm argument, the two can be identified. The other terms in the respective Gersten resolutions are identified by the norm residue isomorphism. This allows to define a restriction map
$$\CH^q(X)/m \to H^{2q}_{\mathrm{cdh}}(X_1,\bb Z/m\bb Z(q))$$
through the diagram
\[
\xymatrix{
H^q_\mathrm{Nis}(X,\hat{\cal K}^M_{q}/m) \ar[r] & H^q_\mathrm{Nis}(X_1,\hat{\cal K}^M_{q}/m) \ar[d]^\cong \\
\CH^q(X)/m \ar@{-->}[r]  \ar[u]^\cong & H^{2q}_{\mathrm{cdh}}(X_1,\bb Z/m\bb Z(q))  ,
}
\]
in which the left vertical isomorphism, that is Bloch's formula, comes from the Gersten conjecture. For the isomorphism on the right see \cite[Sec. 8.2]{KEW16}. This restriction map is studied in \textit{loc. cit.} for $q$ the relative dimension of $X$ over $S$.
\end{remark}

\paragraph{Acknowledgement.} The author would like to thank Matthew Morrow for many discussions on rigidity and Stefan Schreieder for helpful comments. During the preparation of this work the author was supported by the DFG Research Fellowship LU 2418/1-1.

\section{Homology theories}\label{section_homology_theories}

\subsection{Homology theories with duality}
We define homology theories with duality. The main references are \cite{Laumon1974}, \cite{BO74} and \cite{JS03}. 
\begin{definition}\label{definition_homology}
Let $S$ be the spectrum of a field or an excellent Dedekind domain. Let  $\cal{C}_{S}$ be the category of schemes separated and of finite type over $S$. Let  $\cal{C}_{S_*}$ be the category with the same objects as $\cal{C}_{S}$ but where morphisms are just the proper maps in  $\cal{C}_{S}$. Let $Ab$ be the category of abelian groups. A \textit{homology theory} $H(n)=\{H_q(n)\}_{q\in \bb Z}$ on $\cal{C}_{S},$ for some fixed $n\in \bb Z,$ is a sequence of covariant functors:
$$H_q(-,n)\colon\cal{C}_{S_*}\to Ab$$
satisfying the following conditions:
\renewcommand{\labelenumi}{(\arabic{enumi})}
\begin{enumerate}
\item (Presheaf in the \'etale topology) For each \'etale morphism $j\colon V\to X$ in $\cal{C}_{S}$, there is a map $j^*\colon H_q(X,n)\to H_q(V,n)$, associated to $V$ in a functorial way. 
\item (Localisation) If $i\colon Y\hookrightarrow X$ is a closed immersion with open complement $j\colon U\to X$, there is a long exact sequence (called localization sequence) 
$$...\xto{\partial}H_q(Y,n)\xto{i_*}H_q(X,n)\xto{j^*}H_q(U,n)\xto{\partial}H_{q-1}(Y,n)\to... .$$
This sequence is functorial with respect to proper maps and open immersions in an obvious way.
\end{enumerate}
\end{definition}

\begin{remark}(Twist) The \textit{twist} $n$ in the above definition is fixed and could therefore be dropped from the notation. We keep it to keep track of the twist in the corresponding cohomology theory. \end{remark}

\begin{definition}\label{def.HTD}
A \textit{cohomology theory with supports} is given by abelian groups $H^q_Y(X,n)$ 
for any closed immersion $Y\hookrightarrow X$ of schemes in $\cal C_S$ and for any $q,n\in \bb Z$,
which satisfies the following conditions. For $Y=X$, $H^q_Y(X,n)$ is simply denoted by
$H^q(X,n)$. 

\renewcommand{\labelenumi}{(\arabic{enumi})}
\begin{enumerate}

\item (Contravariant functoriality) The groups $H^q_Y(X,n)$ are contravariant functorial for Cartesian squares
\[
\xymatrix{
Y' \ar@{^{(}->}[r] \ar[d] & X' \ar[d]^f \\
Y \ar@{^{(}->}[r]  & X  ,
}
\]
i.e.~there exists a pullback $f^*\colon H^q_Y(X,n) \to H^q_{Y'}(X',n)$.

\item (Localisation)\label{pullback.locseq} For closed immersions $Z\hookrightarrow Y\hookrightarrow X$ there is a long exact sequence 
\[
\cdots \To H^q_Z(X,n) \To H^q_Y(X,n) \To H^q_{Y-Z}(X-Z,n) \stackrel{\partial}{\To} H^{q+1}_Z(X,n)\To \cdots
\]
compatible with pullbacks. Note that this implies in particular that
\[
H^q_Y(X,n) \simeq H^q_{Y_{red}}(X,n)
\]
for a closed immersion $Y\hookrightarrow X$ and the reduced part $Y_{red}$ of $Y$.

\item (Excision) Let $Y\hookrightarrow X\in \cal C_S$ be a closed immersion and $U$ be an open subset of $X$ containing $Z$. Then there is an isomorphism 
$$H^q_Z(X,n)\xto{\cong}H^q_Z(U,n).$$
\end{enumerate}
\end{definition}

\medskip

\begin{definition}\label{def.Hdual}
A \textit{homology theory with duality} $(H_*,H^*)$ is a tuple of a homology theory $H_*$ in the sense of Definition \ref{definition_homology} and a cohomology theory with supports 
$H^*$ in the sense of Definition~\ref{def.HTD}
such that the following structural properties exist: 
\renewcommand{\labelenumi}{(\arabic{enumi})}
\begin{enumerate}
\item (Cap product with supports) For any closed immersion $Y\hookrightarrow X$ in $\cal C_S$ and $p,q\in \bb Z$, there is a pairing 
$$\cap\colon H_p(X,m)\times H^q_Y(X,n)\to H_{p-q}(Y,m-n).$$
\item (Compatibility of cap product with restriction) Consider a Cartesian diagram
\[
\xymatrix{
Y' \ar@{^{(}->}[r] \ar[d]_{f_Y} & X' \ar[d]^{f_X} \\
Y \ar@{^{(}->}[r]  & X  
}
\]
such that $f_X$ is \'etale. Then for $a\in H^q_Y(X,n)$ and $z\in H_p(X,m)$ we have that $f_X^*(a)\cap f_X^*(z)=f_Y^*(a\cap z)$.  

\item (Projection formula) Consider a Cartesian diagram
\[
\xymatrix{
Y' \ar@{^{(}->}[r] \ar[d]_{f_Y} & X' \ar[d]^{f_X} \\
Y \ar@{^{(}->}[r]  & X  
}
\]
such that $f_X$ is proper. Let $a\in H^q_Y(X,n)$ and $z\in H_p(X',m)$. Then the diagram
\[
\xymatrix{
H_p(X',m)\ar@{}[r]|-{\times} \ar[d]_{f_{X*}} & H^q_{Y'}(X',n) \ar[r]^-{z\cap -}  & H_{p-q}(Y',m-n) \ar[d]^{f_{Y'*}} \\
H_p(X,m)\ar@{}[r]|-{\times}  & H^q_Y(X,n) \ar[r]^-{-\cap a} \ar[u]^{f_X^*}  & H_{p-q}(Y,m-n)  
}
\]
commutes in the sense that $f_{X*}(z)\cap a=f_{Y*}(z\cap f_{X}^*(a))$.

\item (Fundamental class) If $X\in \cal C_S$ is of dimension $d$ with structural morphism $\pi\colon X\to S$, then there is a global section $\eta_X$ of $H_{2d}(X,d-\dim \pi(X))$ such that if $\alpha\colon X'\to X$ is \'etale, then $\alpha^*\eta_X=\eta_{X'}$.

\item (Poincar\'e duality) If $\pi\colon X\to S\in \cal C_S$ is regular of dimension $d$ and $Y\hookrightarrow X$ a closed immersion, then the cap product with $\eta_X$ induces an isomorphism
$$\eta_X\cap -\colon H_Y^{2d-q}(X,d-\dim \pi(X)-n)\to H_q(Y,n).$$
\end{enumerate}
\end{definition}

\begin{lemma}\label{lemma_compatibility_fundamental_class_etale_pullback}
Let $(H_*,H^*)$ be a homology theory with duality. Suppose that the square 
\[
\xymatrix{
Y' \ar@{^{(}->}[r] \ar[d]_{f_Y} & X' \ar[d]^{f_X} \\
Y \ar@{^{(}->}[r]  & X  ,
}
\]
is Cartesian and that $f_X$ is \'etale. Then the square
\[
\xymatrix{
H^q_{Y'}(X',n) \ar[r]^-{\cap \eta_{X'}}  & H_{2d-q}(Y',d-\dim \pi(X)-n) \\
H^q_Y(X,n) \ar[r]^-{\cap \eta_{X}} \ar[u]^{f^*_X}  & H_{2d-q}(Y,d-\dim \pi(X)-n) \ar[u]_{f_Y^*} ,
}
\]
commutes. 
\end{lemma}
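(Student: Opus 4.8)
The plan is to deduce the commutativity directly from the two compatibility axioms already built into the definition of a homology theory with duality, so that no new geometric input is required beyond bookkeeping. Writing $m = d - \dim\pi(X)$ for the twist of the fundamental class, the upper path of the square sends a class $a \in H^i_Y(X,n)$ first to $f_X^*(a) \in H^i_{Y'}(X',n)$ and then to $\eta_{X'} \cap f_X^*(a)$, while the lower-then-right path sends $a$ to $\eta_X \cap a$ and then to $f_Y^*(\eta_X \cap a)$. Thus it suffices to establish the identity
\[ \eta_{X'} \cap f_X^*(a) = f_Y^*(\eta_X \cap a). \]

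First I would rewrite the fundamental class of $X'$ in terms of that of $X$. Since $f_X$ is \'etale, the Fundamental class axiom (item (4) of Definition \ref{def.Hdual}) gives $f_X^*(\eta_X) = \eta_{X'}$, where $f_X^*$ denotes the \'etale presheaf pullback on homology provided by axiom (1) of Definition \ref{definition_homology}. Substituting this into the left-hand side turns it into $f_X^*(\eta_X) \cap f_X^*(a)$.

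Next I would invoke the Compatibility of cup product with restriction (item (2) of Definition \ref{def.Hdual}), which applies precisely because $f_X$ is \'etale and the square is Cartesian. Taking the homology class $z = \eta_X \in H_{2d}(X,m)$ and the cohomology class $a \in H^i_Y(X,n)$, this axiom yields $f_X^*(\eta_X) \cap f_X^*(a) = f_Y^*(\eta_X \cap a)$, which is exactly the required identity. Chaining the two equalities closes the argument.

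The only point requiring care --- and essentially the only place an error could creep in --- is matching conventions: one must check that the \'etale pullback on homology appearing in the Fundamental class axiom is literally the same map $f_X^*$ that enters the compatibility axiom, and that the twists line up, namely that capping with $\eta_X$ lands in $H_{2d-i}(Y,m-n) = H_{2d-i}(Y, d-\dim\pi(X)-n)$, which is exactly the right-hand column of the diagram. Once these identifications are confirmed, the statement reduces to the formal two-step chase above, and there is no essential obstacle to overcome.
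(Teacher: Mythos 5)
Your proof is correct and is essentially the paper's own argument: the paper's proof is the one-liner ``This follows from (2) and (4),'' and you have simply spelled out that chase, using the Fundamental class axiom to get $\eta_{X'}=f_X^*(\eta_X)$ and then the compatibility of the cup product with \'etale restriction to conclude $f_X^*(\eta_X)\cap f_X^*(a)=f_Y^*(\eta_X\cap a)$.
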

\begin{proof}
This follows from (2) and (4).
\end{proof}

We will need the following properties:
\begin{property}\label{property_blowup_formula}(Descent for blow-ups)
Let $X\in Ob(\cal C_S)$ and $\pi\colon \tilde X\to X$ be the blow-up of $X$ along a closed subscheme $Z$. Let $E\hookrightarrow \tilde X$ be the exceptional divisor. We say that a homology theory with duality $(H_*,H^*)$ satisfies the \textit{descent for blow-ups property} if for any integer $q$ and any $X,Z$ as above, there is a split exact sequence
$$0 \to \tilde H_q(E,n)\to H_q(\tilde X,n)\xto{\pi_*}H_q(X,n)\to 0,$$
where $$0 \to \tilde H_q(E,n):=\mathrm{ker}(\pi_*\colon H_q(E,n)\to H_q(Z,n)).$$
\end{property}

\begin{property}\label{property_local_principality}
(Principal triviality) Let $\pi\colon X\to S\in \cal C_S$. Let $i\colon W\hookrightarrow X$ be a smooth principal divisor and $\dim(W)=d$. Assume that $W$ is flat over $\pi(X)$. We say that a homology theory with duality $(H_*,H^*)$ satisfies the \textit{principal triviality property} if the morphism $i_*\colon H_{2d}(W,d-\dim \pi(X))\to H_{2d}(X,d-\dim \pi(X))$ is zero for all $X,W$ as above.
\end{property}

\begin{property}\label{property_lifting}
(Lifting) Let $R$ be a regular semi-local ring and $Z$ be the set of closed points of $\Spec(R)$. We say that a homology theory with duality $(H_*,H^*)$ satisfies the \textit{lifting property} if the sequence $$0\to H^q(R,n)\to H^q(K(R),n)\xto{}H^{q-1}(Z,n-1)\to 0$$ is exact. 
\end{property}



\subsection{An example: \'etale cohomology}\label{subsection_example_etale_cohom}
The following is the main example which satisfies the above definitions and properties. 

\begin{example}\label{PBexam3}
Let $S=\Spec \cal O_K$ be the spectrum of a 
discrete valuation ring $\cal O_K$ with residue field $k$. 
Let $\Lambda=\bb Z/\ell^r\bb Z$ and $\ell$ be a prime number coprime to $\mathrm{char}(k)$. Let $\Lambda(n)=\mu_{\ell^r}^{\otimes n}$ for $n\in \bb N$ and  $\Lambda(n)=\mathrm{Hom}(\Lambda(-n),\Lambda)$ for $-n\in \bb N$ denote the Tate twists. One can define an \textit{\'etale homology theory with duality} as follows: define the homology of a morphism $\pi\colon X\to S\in \cal C_S$ to be
\[
H^\sub{\'et}_q(X,n) = H^{2-q}(X_\sub{\'et} , R\, \pi^! \Lambda (-n)) 
\]
and the cohomology with support in $Y\hookrightarrow X$ to be
\[
H_Y^q (X,n ) = H^{q}_Y(X_\sub{\'et} ,\Lambda (n) ) .
\]
Indeed, it is shown in \cite[Ex. 2.2]{JS03} and \cite[Prop. 1.5]{SS10} that $H^\sub{\'et}_q(-)$ defines a homology theory. The fact that $H_Y^q (X,n )$ is a  cohomology theory with supports is well known and it is shown in \cite[Lem. 1.8]{SS10} that these two form a homology theory with duality. In particular, if $\dim X=d$ and $X$ is regular, then there is an isomorphism
$$\tau_{X,Y}\colon H_Y^{2d-q}(X,\Lambda(d-\dim(\pi(X))-n))\xto{\cong} H_q(Y,\Lambda(n)).$$ 
For the convenience of the reader, we sketch the proof of this isomorphism if $\pi$ is flat, following the proof of the existence of this isomorphism of \textit{loc. cit.}. For the case in which $\pi$ factorises through the closed point of $S$, i.e. is not flat, we refer to \cite[Sec. 2]{BO74} or also \cite[Lem. 1.8]{SS10}. Let
$$Tr_\pi\colon R\pi_!\Lambda(d-1+n)_X[2d-2]\to \Lambda_S(n)$$ be the trace morphism due to Deligne \cite[XVIII, Thm. 2.9]{SGA4} and
$$T_\pi\colon \Lambda_X(d-1+n)[2d-2]\to R\pi^!\Lambda_S(n)$$
its adjoint.
Applying $H^{2-q}_Y(X,-)$ to $T_\pi$ gives the morphism $\tau_{X,Y}$. It therefore suffices to show that $T_\pi$ is an isomorphism. Fix a locally closed embedding $\gamma\colon X\hookrightarrow \bb P:=\bb P^N_S$. Let $ \rho\colon\bb P\to S$ be the projection. Then $T_\pi$ factors as 
$$\Lambda_X(d-1+n)[2d-2]\xto{\mathrm{Gys}_\gamma} R\gamma^!\Lambda(N+n)_{\bb P}[2N] \xto{T_\rho} R\gamma^!R\rho^! \Lambda_S(n)= R\pi^!\Lambda_S(n)$$
by \cite[Ch. 4, Thm. 2.3.8(i)]{DeligneSGA}. Since the Gysin morphism $\mathrm{Gys}_\gamma$ is an isomorphism by absolute purity \cite{Fuj02} and $T_\rho$ is an isomorphism by \cite[XVIII, Thm. 3.2.5]{SGA4} (i.e. Poincar\'e duality for $\bb P$), we get that $T_\pi$ is an isomorphism.
\end{example}

We now show that the \'etale homology theory with duality, or short \'etale cohomology, of Example \ref{PBexam3} has Properties \ref{property_blowup_formula}, \ref{property_local_principality} and \ref{property_lifting}.

\begin{lemma}[See also {\cite[Lem. 6.1]{SS10}}]
Let $X\in \cal C_S$ be regular of dimension $d$. Let $\pi\colon\tilde{X}\to X$ be the blow-of $X$ along a closed subscheme $V\subset X$. Let $W$ be a closed subscheme of $X$ not contained in $V$ and $\tilde{W}$ its strict transform. Let $E_W\hookrightarrow \tilde{W}$ be the exceptional divisor. Then there is a split exact sequence
$$0\to \tilde{H}_q(E_W,n)\to H_q(\tilde{W},n)\to H_q(W,n)\to 0,$$
where $$\tilde{H}_q(E_W,n):=\ker(\pi_*\colon H_q(E_W,n)\to H_q(V\cap W,n)).$$
I.o.w. the \'etale cohomology theory of Example \ref{PBexam3} satisfies the blow-up Property \ref{property_blowup_formula}.
\end{lemma}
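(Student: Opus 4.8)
The plan is to establish the split exact sequence
\[
0\to \tilde{H}_q(E_W,n)\to H_q(\tilde{W},n)\xto{\pi_*} H_q(W,n)\to 0
\]
for étale homology by reducing it to a computation with the localisation sequence and then producing an explicit splitting. First I would set up the geometry: since $W$ is not contained in the blow-up centre $V$, the restriction $\pi|_{\tilde{W}}:\tilde{W}\to W$ is the blow-up of $W$ along $V\cap W$, and $\tilde{W}\setminus E_W\xto{\cong} W\setminus (V\cap W)$ is an isomorphism. This reduces the claim to the case $W=X$ (applied to $\tilde W\to W$), so I may as well prove the statement for the blow-up $\pi:\tilde X\to X$ along $V$ with exceptional divisor $E$, and deduce surjectivity of $\pi_*$ together with the identification of its kernel.

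The key input is a commutative ladder of the two localisation sequences (Definition \ref{definition_homology}(2)) induced by $\pi$. Writing $U=X\setminus V$ and noting $\tilde X\setminus E\cong U$, the proper map $\pi$ gives a map of localisation triangles
\[
\xymatrix{
H_q(E,n)\ar[r]^-{i_*}\ar[d]_{\pi_*} & H_q(\tilde X,n)\ar[r]^-{j^*}\ar[d]^{\pi_*} & H_q(U,n)\ar[d]^{=} \\
H_q(V,n)\ar[r]^-{i_*} & H_q(X,n)\ar[r]^-{j^*} & H_q(U,n),
}
\]
where the right vertical map is the identity because $\pi$ is an isomorphism over $U$. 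Chasing this diagram shows that $\pi_*:H_q(\tilde X,n)\to H_q(X,n)$ is compatible with the two boundary maps $\partial$ into $H_{q-1}(E,n)$ and $H_{q-1}(V,n)$, and that $\ker(\pi_*)$ is controlled by the difference of the exceptional-divisor contributions. The content is then to show that, modulo the image of $i_*$ from $H_q(V,n)$, the sequence splits and the kernel of $\pi_*$ is exactly $\tilde H_q(E,n)=\ker(H_q(E,n)\to H_q(V,n))$.

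To upgrade exactness to a \emph{split} exact sequence I would construct an explicit section. The natural candidate uses Poincaré duality (Definition \ref{def.Hdual}(5)) to transport the problem to cohomology, where the projective bundle structure of $E=\mathbb P(\mathcal N)\to V$ (the projectivised normal cone, a projective bundle since $X$ is regular and $V$ is suitably chosen) supplies a projective bundle formula: the cohomology of $E$ decomposes into a copy pulled back from $V$ plus free summands generated by powers of the tautological class, and the summand complementary to the image of $V$ is precisely $\tilde H_q(E,n)$. The retraction $H_q(\tilde X,n)\to \tilde H_q(E,n)$ then comes from restricting to $E$ and projecting onto this complementary summand; compatibility with $\pi_*$ is checked via the projection formula (Definition \ref{def.Hdual}(3)). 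For the étale theory specifically I expect to cite the known blow-up formula in étale cohomology, so that the abstract axioms only need to reproduce the splitting.

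The main obstacle is the splitting, not the bare exactness. Exactness of $0\to \tilde H_q(E,n)\to H_q(\tilde X,n)\to H_q(X,n)\to 0$ falls out of a diagram chase on the two localisation sequences, but producing a canonical section compatible with all the structure requires either the projective bundle decomposition of $H_*(E,n)$ or a direct appeal to the corresponding statement in étale cohomology (as in \cite[Lem. 6.1]{SS10}). The delicate point is verifying that the tautological-class summands of $H_*(E,n)$ really do furnish a $\pi_*$-splitting and that the identification $\tilde H_q(E,n)=\ker(H_q(E,n)\to H_q(V,n))$ is compatible with this decomposition; here I would lean on the projection formula and Lemma \ref{lemma_compatibility_fundamental_class_etale_pullback} to reconcile the homological pushforwards with the cohomological projective bundle formula.
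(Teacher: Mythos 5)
Your overall skeleton (ladder of the two localisation sequences, then a splitting of $\pi_*$) matches the paper's, but the way you build the splitting breaks down at two points. First, your reduction to ``$W=X$'' discards exactly what makes the splitting constructible. The lemma concerns an \emph{arbitrary} closed subscheme $W$ of the regular scheme $X$, and the paper's section $\pi^*$ of $\pi_*$ is built by passing through cohomology with supports in the \emph{ambient} regular schemes:
\[
H_p(W,n)\;\cong\; H^{2d-p}_W(X,d-\dim\pi(X)-n)\;\xrightarrow{\ \pi^*\ }\; H^{2d-p}_{\tilde W}(\tilde X,d-\dim\pi(X)-n)\;\cong\; H_p(\tilde W,n),
\]
where the two outer isomorphisms are Poincar\'e duality (Definition \ref{def.Hdual}(5)) for $X$ and $\tilde X$, the middle map is contravariant functoriality of $H^*$ with supports, and $\pi_*\pi^*=\mathrm{id}$ follows from the projection formula together with $\pi_*(1)=1$ (as $\pi$ is an isomorphism off the exceptional locus). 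This construction never needs $W$ or $E_W$ to have any good structure. After your reduction, the scheme being blown up is $W$ itself, which may be arbitrarily singular, so Poincar\'e duality is simply not available for it and no duality-based transport (yours or the paper's) can be run; the whole point of keeping the supports $W\subset X$, $\tilde W\subset\tilde X$ is to borrow regularity from the ambient scheme.

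Second, the projective bundle formula you invoke for $E\to V$ does not exist at this level of generality: $V$ is an arbitrary closed subscheme of $X$ (it is not ``suitably chosen''), so the exceptional divisor is the projectivised normal \emph{cone}, which is a projective bundle only when $V\hookrightarrow X$ is a regular immersion; a fortiori $E_W\to V\cap W$ carries no bundle structure when $W$ is singular. So the retraction onto $\tilde H_q(E,n)$ that you propose cannot be defined in general. A smaller inaccuracy: the short exact sequence does not ``fall out of a diagram chase'' independently of the splitting --- even the injectivity of $\tilde H_q(E_W,n)\to H_q(\tilde W,n)$ in the chase uses surjectivity of $\pi_*$ in degree $q+1$, and that surjectivity is precisely what the section provides. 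The paper's route (section via duality-with-supports plus the projection formula, with no analysis of the exceptional divisor at all) is both simpler and the only one of the two that works for arbitrary $V$ and $W$.
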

\begin{proof}
There is a commutative diagram with exact sequences
\[
  \xymatrix{ 
 H_{q+1}(\tilde{W}-E_W,n) \ar[d]^\cong \ar[r] & H_q(E_W,n)\ar[r] \ar[d] & H_q(\tilde{W},n)\ar[r]  \ar[d]^{\pi_*}  & H_q(\tilde{W}-E_W,n) \ar[d]^\cong
  \\
H_{q+1}(W-V\cap W,n) \ar[r] & H_q(V\cap W,n)\ar[r] & H_q(W,n)   \ar[r]_{} & H_q(W-V\cap W,n).
  }
\]
We need to show that the map $\pi_*\colon H_q(\tilde{W},n)\to H_q(W,n)$ is split surjective. We define a split $\pi^*\colon H_q(W,n)\to H_q(\tilde{W},n)$ of $\pi_*$ to be the composition
$$\pi^*\colon H_q(W,n)\xto{\cong} H_W^{2d-q}(X,d-\dim \pi(X)-n)\xto{\pi^*} H_{\tilde{W}}^{2d-q}(\tilde{X},d-\dim \pi(X)-n)\xto{\cong} H_q(\tilde{W},n)$$
and show that $\pi_*\pi^*=\mathrm{Id}$. By the projection formula we have a commutative diagram of cap-product pairings
\[
  \xymatrix{ 
H^0(\tilde{X},0) \ar@{}[r]|-{\times} \ar[d]^{\pi_*} &  H_{\tilde{W}}^{2d-q}(\tilde{X},d-\dim \pi(X)-n) \ar[r]^\cap  & H_{\tilde{W}}^{2d-q}(\tilde{X},d-\dim \pi(X)-n) \ar[d]^{\pi_*}    \\
H^{0}(X,0) \ar@{}[r]|-{\times}  &  H_W^{2d-q}(X,d-\dim \pi(X)-n) \ar[u]^{\pi^*} \ar[r]^{\cap} & H_W^{2d-q}(X,d-\dim \pi(X)-n).
  }
\]
This implies that $\pi_*\pi^*(a)=\pi_*(1\cap \pi^*(a))=\pi_*(1)\cap a=a$ since $\pi_*(1)=1$ as $\pi$ is an isomorphism outside of $E$.
\end{proof}

\begin{lemma}
The \'etale cohomology theory of Example \ref{PBexam3} satisfies the principal triviality Property \ref{property_local_principality}.
\end{lemma}
\begin{proof}
Let $\pi\colon X\to S\in \cal C_S$. Let $i\colon W\hookrightarrow X$ be a smooth principal divisor and $\dim(W)=d$. Assume that $W$ is flat over $\pi(X)$. Consider the commutative diagram 
\[
\xymatrix{
H_{2d}(W,d-\dim \pi(X)) \ar[r]^{}  &  H_{2d}(X,d-\dim \pi(X)) \\
H^0(W,\Lambda) \ar[r] \ar[u]_{\cong}  & H^{2}(X,\Lambda(1)) \ar[u]_{\cong}.
}
\]
The lower horizontal map sends the fundamental class of $W$ to zero since $[\cal O_W]=0\in \Pic(X)$ as $W$ is principle which implies the statement. Indeed, we have a commutative diagram
\[
\xymatrix{
H^0(W,\Lambda) \ar[r]   & H^{2}(X,\Lambda(1))  & \Pic(X) \ar[l]  \\
  & H^{2}_W(X,\Lambda(1)) \ar[u]  & H^{1}_W(X,\roi^\times)  \ar[l] \ar[u] 
}
\]
and $1\in H^0(W,\Lambda)$ and $1\in H^{1}_W(X,\roi^\times) $ have the same image in $H^{2}(X,\Lambda(1)) $.
\end{proof}

The following lemma is proved in \cite[Thm. B.2.1]{CHK97}. We give a sketch for the sake of completeness. Note that the case of discrete valuation rings is sufficient for applications to the Gersten conjecture in the smooth case (see the end of the proof of Corollary \ref{theorem_smooth_case}), while we need the semi-local case in Section \ref{section_semi_stable_case} for the semi-stable case.
\begin{lemma}
The \'etale cohomology theory of Example \ref{PBexam3} satisfies the lifting Property \ref{property_lifting}.
\end{lemma}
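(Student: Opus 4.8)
The plan is to translate the assertion into \'etale cohomology with supports via the duality isomorphism $\tau$ of Example \ref{PBexam3}, where it becomes the classical fact that the cycle class of a principal divisor vanishes, and then to realize that cycle class as the boundary of the Kummer class of the defining function. Set $m:=d-\dim\pi(X)$, and note that since $W$ is a divisor we have $\dim X=d+1$; as the Poincar\'e duality axiom (Definition \ref{def.Hdual}(5)) is needed, I would assume $X$ regular and connected, which is the situation in all applications of Property \ref{property_local_principality}. Because $W$ is flat over $\pi(X)$ we have $\dim\pi(W)=\dim\pi(X)$, so the fundamental class $\eta_W$ of Definition \ref{def.Hdual}(4) lies in $H_{2d}(W,m)$, and Poincar\'e duality for the smooth scheme $W$ gives $H_{2d}(W,m)\cong H^0(W,\Lambda)$, generated by the fundamental classes of the connected components of $W$. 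Since $W$ is smooth these components are irreducible, and after the reduction discussed below we may assume $W$ is irreducible with $H_{2d}(W,m)=\Lambda\cdot\eta_W$ and $W=V(f)$ for a single $f\in\Gamma(X,\roi_X)$. It then suffices to prove $i_*(\eta_W)=0$.

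First I would record the duality dictionary. Applying Poincar\'e duality to the regular schemes $X$ (dimension $d+1$), $U:=X\setminus W$ (dimension $d+1$) and $W$ (dimension $d$), together with the trace isomorphism $\tau$ of Example \ref{PBexam3}, one obtains identifications
\[
H_{2d}(W,m)\cong H^2_W(X,\mu_{\ell^r}),\qquad H_{2d}(X,m)\cong H^2(X,\mu_{\ell^r}),\qquad H_{2d+1}(U,m)\cong H^1(U,\mu_{\ell^r}),
\]
under which $\eta_W$ corresponds to the \'etale cycle class $\mathrm{cl}_W(W)\in H^2_W(X,\mu_{\ell^r})$ (absolute purity, as in Example \ref{PBexam3}), and the homology boundary of Definition \ref{definition_homology}(2) corresponds to the boundary $\partial\colon H^1(U,\mu_{\ell^r})\to H^2_W(X,\mu_{\ell^r})$ of the localization sequence of Definition \ref{def.HTD}(2). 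Under the middle identification, the class $i_*(\eta_W)$ becomes the image of $\mathrm{cl}_W(W)$ in $H^2(X,\mu_{\ell^r})$, i.e. the first Chern class of $\roi_X(W)$; its vanishing is precisely the statement that a principal divisor has trivial cycle class.

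The key step is the residue computation. Since $W=V(f)$ is reduced, $f$ restricts to a unit on $U$ and defines a Kummer class $[f]\in H^1(U,\mu_{\ell^r})$; because $f$ is a uniformizer along the regular divisor $W$ (valuation one), the standard compatibility of the boundary map with the tame symbol gives $\partial[f]=\mathrm{cl}_W(W)$, that is, $\partial[f]=\eta_W$ in $H_{2d}(W,m)$. I would then conclude directly from the homology localization sequence of Definition \ref{definition_homology}(2),
\[
H_{2d+1}(U,m)\xto{\partial}H_{2d}(W,m)\xto{i_*}H_{2d}(X,m),
\]
since $\eta_W=\partial[f]$ lies in the image of $\partial$, which equals $\ker(i_*)$ by exactness. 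Hence $i_*(\eta_W)=0$, and undoing the reduction to components shows that $i_*$ vanishes on all of $H_{2d}(W,m)$.

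The main obstacle is the residue identity $\partial[f]=\eta_W$: one must match the boundary of the abstract (co)homology localization sequence with the \'etale residue and verify that the Kummer class of a uniformizer has residue equal to the fundamental class, which is exactly absolute purity together with the computation of the cycle class of a principal divisor as the Kummer boundary of its defining unit (SGA $4\tfrac12$, \emph{Cycle}). A secondary, more bookkeeping point is the reduction to irreducible $W$: a connected component of a reducible principal divisor need not itself be principal, and here I would use that in the intended local applications $X$ is the spectrum of a regular (semi-)local ring, so that $\Pic(X)=0$ and every component of $W$ is again principal, allowing the argument above to be run componentwise.
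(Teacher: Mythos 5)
Your argument is correct and is essentially the paper's own: both proofs use Poincar\'e duality and purity to identify $i_*\eta_W$ with the image of the cycle class of $W$ in $H^2(X,\Lambda(1))$, and both kill that class using the defining function --- the paper by noting that this class factors through $[\roi_X(W)]=0\in\Pic(X)$ under the Kummer map, you by writing $\mathrm{cl}_W(W)=\partial[f]$ and invoking exactness of the localization sequence, which is the same vanishing read off at the adjacent spot of the commutative ladder comparing the $\roi^\times$- and $\mu_{\ell^r}$-localization sequences. The one point where you are more careful than the paper is disconnected $W$: the paper's diagram chase only shows that the fundamental class dies (which is all the Key Lemma actually uses), whereas you note that a component of a principal divisor need not be principal and handle this by passing to the (semi-)local situation where $\Pic$ vanishes.
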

\begin{proof}
We recall the notation: 
let $R$ be a regular semi-local ring and $Z$ be the set of closed points of $\Spec(R)$. We need to show that the sequence $$0\to H^q(R,n)\to H^q(K(R),n)\xto{}H^{q-1}(Z,n-1)\to 0$$ is exact. Let $R^h$ be the henselisation of $R$ along $Z$. Let $a\in H^{q-1}_\sub{\'et}(Z,\Lambda(n-1))$. By Gabber's affine analogue of proper base change we have that $H^{q-1}_\sub{\'et}(Z,\Lambda(n-1))\cong H^{q-1}_\sub{\'et}(R^h,\Lambda(n-1))$. This implies that $a$ is in the image of some $a'\in H^{q-1}_\sub{\'et}(A,\Lambda(n-1))$ where $A$ is an \'etale neighbourhood of $Z$ and quasi-finite over $R$. Let $R'$ be the integral closure of $R$ in $F'=\Frac(A)$.
Let $f\colon\Spec(R')\to \Spec(R)$ be the natural map and recall that there is a trace map on cohomology induced by the counit map $f_!f^{*}\to \mathrm{Id}$ (see \cite[expos\'e XVIII]{SGA4}). Let $T':=f^{-1}(Z)$.
We consider the commutative diagram
\[
\xymatrix{
H^q_\sub{\'et}(R',\Lambda(n)) \ar[r] \ar[d]^{tr} & H^q_\sub{\'et}(F',\Lambda(n)) \ar[r] \ar[d]^{tr} \ar@/^2pc/[rr]^-{\oplus\partial} & H^q_{T'}(R',\Lambda(n)) \ar[d]^{tr}  \ar@{=}[r]^-\sim &  \bigoplus_{\mathfrak{p}} H^{q-1}_\sub{\'et}(R'/\mathfrak{p},\Lambda(n-1)) \ar[d]^{tr} \\
H^q_\sub{\'et}(R,\Lambda(n)) \ar[r] & H^q_\sub{\'et}(\Frac(R),\Lambda(n)) \ar[r]^-{} \ar@/_2pc/[rr]_-{\partial}  & H^{q-1}_\sub{\'et}(Z,\Lambda(n-1)) \ar@{=}[r]^-\sim &  \bigoplus_{\mathfrak{m}}H^{q-1}_\sub{\'et}(R/\mathfrak{m},\Lambda(n-1)) ,
}
\]
where the $\mathfrak{p}$ are the prime ideals of $R'$ lying over $Z$ and the $\mathfrak{p}$ are the maximal ideals of $R$. By the Chinese remainder theorem we can choose an element $g\in F'^\times$  which is congruent to $0$ at a copy of $Z$ in $T'$ and congruent to $1$ otherwise. Then $\partial\circ tr(a'\cup g)= tr\circ (\oplus\partial)(a'\cup g)=a$.
\end{proof}

\begin{remark}
Let $S=\Spec \cal O_K$ be the spectrum of a 
discrete valuation ring $\cal O_K$ with perfect residue field $k$ of characteristic $p>0$. 
Let $X$ be a regular semi-stable scheme over $S$ and let $\cal T_r(n)_X$ denote the $p$-adic \'etale Tate twists defined in \cite{Sa07}. One can define a \textit{$p$-adic \'etale homology theory} in weight $-1$ as follows (see \cite[Sec. 4.10]{JSS}): define the homology of a morphism $\pi\colon X\to S\in \cal C_S$ to be
\[
H_q(X,\bb Z/p^n(-1)) = H^{2-q}(X_\sub{\'et} , R\, \pi^! \cal T_r(1)_S) 
\]
and the cohomology with support in $Y\hookrightarrow X$ to be
\[
H_Y^q (X,n) = H^{q}_Y(X_\sub{\'et} ,\cal T_r(n)_X ) .
\]
There is a Poincar\'e duality isomorphism
\[
H_Y^{2d-q} (X,\cal T_r(d)_X) \cong H^{2-q}_Y(X_\sub{\'et} ,\cal T_r(d)_X[2d-2] )\cong H^{2-q}_Y(X_\sub{\'et} ,Rf^!\cal T_r(1)_S )\cong H_q(Y,-1),
\]
the relative duality theorem of Sato \cite[Thm. 7.3.1]{Sa07} giving the isomorphism in the middle. However, the theory not being defined for arbitrary weights, it does not seem to be possible to define cap products with supports.
\end{remark}

\section{The Gersten conjecture: the smooth case}
Let $S$ be the spectrum of an excellent discrete valuation ring $\cal O_K$ and $X$ be a regular and flat scheme over $S$. Let $(H_*,H^*)$ be a homology theory with duality and let $\cal H^*(n)$ be the sheafification of the presheaf $U\mapsto H^*(U,n)$ for the Zariski topology on $X$. We begin by recalling some general reductions which imply that the sequence of sheaves in the Zariski topology on $X$
\begin{equation}
0\to \cal H^q(n) \to \bigoplus_{x\in X^{(0)}}i_{*,x}H^{q}(x,n)\to \bigoplus_{x\in X^{(1)}}i_{*,x}H^{q-1}(x,n-1)\to...\to \bigoplus_{x\in X^{(d)}}i_{*,x}H^{q-d}(x,n-d)\to 0 \tag{$*$}
\end{equation}
is exact (Section \ref{section_general_reductions}). Then in Section \ref{section_smooth_case} we prove the smooth case and in Section \ref{section_semi_stable_case} a part of the semi-stable case.

\subsection{General reductions}\label{section_general_reductions}
In this section let $(H_*,H^*)$ be a homology theory with duality and let $\cal H^*(n)$ be the sheafification of the presheaf $U\mapsto H^*(U,n)$ for the Zariski topology on $X$.
\begin{definition}
\renewcommand{\labelenumi}{(\arabic{enumi})}
\begin{enumerate}
\item Let $X$ be a scheme separated and of finite type over $S$. We let
$$Z^p(X):=\{Z\subset X |Z \quad\mathrm{ closed}, \mathrm{codim}_X(Z)\geq p\}.$$
If $X$ is clear from the context, then we also write $Z^p$ instead of $Z^p(X)$. Let $Z^p/Z^{p+1}$ denote the ordered set of pairs $(Z, Z')\in Z^p\times Z^{p+1}$ such that $Z'\subset Z$, with the ordering $(Z,Z')\geq (Z_1,Z_1')$ if $Z\supset Z_1$ and $Z'\supset Z_1'$.
\item For the Zariski topology on $X$, we define $\cal H^q_{Z^p}(n)$ to be the sheaf associated to the presheaf
$$U\mapsto \varinjlim_{Z\in Z^p(U)} H^q_{Z}(U,n)$$
and $\cal H^q_{Z^p/Z^{p+1}}(n)$ to be the sheaf associated to the presheaf
$$U\mapsto \varinjlim_{(Z,Z')\in Z^p(U)/Z^{p+1}(U)} H^q_{Z-Z'}(U-Z',n).$$
\item Let
$$H_q(x,n):=\varinjlim_{U\subseteq \overline{\{x\}}} H_q(U,n),$$
$$H^q(x,n):=\varinjlim_{U\subseteq \overline{\{x\}}} H^q(U,n)$$
and
$$H^q_x(X,n):=\varinjlim_{U\ni x} H^q_{\overline{\{x\}}\cap U}(U,n),$$
where the colimit is taken over all open subschemes $U$ in $\overline{\{x\}}$ and all open subschemes $U$ in $X$ containing $x$ respectively.
\end{enumerate}
\end{definition}

The complex in $(*)$ is exact if the natural map $$(**)\colon\cal H^q_{Z^{p+1}}(n)\to \cal H^q_{Z^p}(n)$$ is zero for all $p,q,n\in \mathbb{N}$. Indeed, assuming $(**)=0$ for all $p,q,n\in \mathbb{N}$, the exact sequences 
$$..\to \cal H^{q-1}_{Z^p/Z^{p+1}}(n)\to\cal H^q_{Z^{p+1}}(n)\xto{0} \cal H^q_{Z^p}(n)\to \cal H^{q}_{Z^p/Z^{p+1}}(n)\to..$$
and
$$..\to \cal H^{q-1}_{Z^{p+1}/Z^{p+2}}(n)\to\cal H^q_{Z^{p+2}}(n)\xto{0} \cal H^q_{Z^{p+1}}(n)\to \cal H^{q}_{Z^{p+1}/Z^{p+2}}(n)\to.. $$
splice together as follows:
$$..\to \cal H^{q-1}_{Z^p/Z^{p+1}}(n)\twoheadrightarrow\cal H^q_{Z^{p+1}}(n)\hookrightarrow \cal H^q_{Z^{p+1}/Z^{p+2}}(n) \twoheadrightarrow \cal H^q_{Z^{p+2}}(n)\to..\; .$$
Assuming that $X$ is regular, the Poicar\'e duality isomorphism (Definition \ref{def.Hdual}(5)) implies that
$$H^q_x(X,n)\cong H^{q-2p}(x,n-p)$$
for $\mathrm{codim}_X(\overline{\{x\}})=p$. This in turn implies\footnote{We use the fact that if $T_1,...,T_r$ are pairwise disjoint closed subsets of $X$, then $\bigoplus_i H^q_{T_i}(X,A)\cong H^q_{\cup T_i}(X,A)$ (see \cite[Lemma  1.2.1]{CHK97}).}  that there are isomorphisms 
$$\cal H^{q-1}_{Z^p/Z^{p+1}}(n)\cong \bigoplus_{x\in Z^p/Z^{p+1}}i_{*,x}H^{q-2p-1}(x,n-p)$$ and $$\cal H^{q}_{Z^{p+1}/Z^{p+2}}(n)\cong \bigoplus_{x\in Z^{p+1}/Z^{p+2}}i_{*,x}H^{q-2p-2}(x,n-p-1).$$

The following definition is used in applications to show that $(**)$ is zero.
\begin{definition}
Let $f\colon Z_1\to Z_2$ be a morphism in $\cal C_{S*}$ and $T\subseteq Z_2$ a finite set. We say that $f$ is \textit{homologically effaceable} at $T$, iff there is an open $U\subset Z_2$ containing $T$ such that the composition
$$H_*(Z_1)\to H_*(Z_2)\to H_*(U)$$
is zero.
\end{definition}
If for every $x\in X$ and every open subset $x\in W\subset X$ and every $x\in Z_1\in Z^{p+1}(W)$ there exists a $Z_2\in Z^p(W)$ and a homologically effaceable morphism $f\colon Z_1\to Z_2$ at $x$, then $(**)$ is zero. In cohomological terms, this is equivalent to saying that the composition 
$$H^*_{Z_1}(W)\to H^*_{Z_1\cap U}(U)\to H^*_{Z_2\cap U}(U)$$
(which factorises through $H^*_{Z_2}(W)$) is zero.

\subsection{A key lemma due to Bloch and Ogus}
In this section we transfer two lemmas of Bloch and Ogus to the relative situation. The proofs go through the same way but we give them for the convenience of the reader - all that is needed are the smoothness assumptions on certain morphisms, which are relative notions. 
\begin{lemma}\label{lemma_cartesian_square}\cite[Lem. 5.1]{BO74}
Let 
\[
  \xymatrix{ 
  Z_2 \ar@{^{(}->}[r]^{i_2}  \ar[d]_g & X_2 \ar[d]^{f}  \\
   Z_1\ar@{^{(}->}[r]^{i_1} \ar[r] & X_1 
   }
\]
be a commutative square in $\cal C_S$ with $i_1,i_2$ closed immersions and $f,g$ smooth. Let $T\subset X_2$ be a finite set of points contained in an affine scheme. Then after replacing $Z_2$ and $X_2$ by Zariski neighbourhoods of $T$, there exists a closed subscheme $X_2'\subset X_2$ containing $Z_2$ such that the induced morphism $f\colon X_2'\to X_1$ is still smooth and the square
\[
  \xymatrix{ 
  Z_2 \ar@{^{(}->}[r]^{i_2}  \ar[d]_g & X_2' \ar[d]^{f'}  \\
   Z_1\ar@{^{(}->}[r]^{i_1}  & X_1 
   }
\]
is Cartesian.
\end{lemma}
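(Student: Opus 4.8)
The plan is to build $X_2'$ locally as the common zero locus inside $X_2$ of $N-r$ functions taken from the ideal $I$ of $Z_2$, where $N$ and $r$ denote the relative dimensions of $f$ and $g$; the functions will be chosen so that their relative differentials are linearly independent at the points of $S$, which by the Jacobian criterion forces $f|_{X_2'}$ to be smooth of relative dimension $r$. After shrinking $X_2$ I may assume $f$ and $g$ have constant relative dimensions $N$ and $r$, and I will treat the relevant case $S\subseteq Z_2$ (points off $\overline{Z_2}$ are discarded by shrinking $X_2$).

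The differential input is the following. Since $i_1$ is a closed immersion, $\Omega^1_{Z_1/X_1}=0$, so the first fundamental sequence for $Z_2\xrightarrow{g}Z_1\xrightarrow{i_1}X_1$ gives $\Omega^1_{Z_2/X_1}\cong\Omega^1_{Z_2/Z_1}$, locally free of rank $r$. Feeding this into the conormal sequence of $i_2$ relative to $X_1$,
\[ I/I^2\xrightarrow{\ \bar d\ } i_2^*\Omega^1_{X_2/X_1}\To \Omega^1_{Z_2/Z_1}\To 0, \]
and tensoring with the residue field $\kappa(x)$ at a point $x\in S$, the right-hand map is a surjection of a rank-$N$ space onto a rank-$r$ space, so the image of $\bar d\otimes\kappa(x)$ is exactly its kernel, of dimension $N-r$ (in particular $r\le N$). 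Hence I can choose $\phi_1,\dots,\phi_{N-r}\in I$ whose classes in $(I/I^2)\otimes\kappa(x)$ map to a basis of this kernel, i.e.\ the $d\phi_i$ are linearly independent in $\Omega^1_{X_2/X_1}\otimes\kappa(x)$. To achieve this simultaneously at the finitely many points of $S$ I would work over the affine scheme containing $S$ and use a general-position argument over the semi-local ring of $S$: the conditions that the $d\phi_i$ be independent at each $x\in S$ are open, and a system of generators of the relevant free quotient at each closed point can be lifted to global sections of $I$. Setting $X_2':=V(\phi_1,\dots,\phi_{N-r})$, the inclusion $\phi_i\in I$ gives $Z_2\subseteq X_2'$, while independence of the $d\phi_i$ and the Jacobian criterion relative to $X_1$ make $f':X_2'\to X_1$ smooth of relative dimension $r$ at each point of $S$; by openness of smoothness I shrink $X_2$ to a neighbourhood of $S$ on which $f'$ is smooth.

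It then remains to make the square cartesian after a further shrink, i.e.\ $Z_2=X_2'\times_{X_1}Z_1$ near $S$. Both $Z_2$ and $Y:=X_2'\times_{X_1}Z_1$ are smooth over $Z_1$ of relative dimension $r$ (the latter by base change of $f'$), and the map $Z_2\to Y$ induced by $i_2$ and $g$ is a closed immersion. A closed immersion between $Z_1$-smooth schemes of equal relative dimension is an open immersion: its conormal sequence exhibits $i^*\Omega^1_{Y/Z_1}\to\Omega^1_{Z_2/Z_1}$ as a surjection of locally free sheaves of the same rank $r$, hence an isomorphism, so $Z_2\to Y$ is unramified; on fibres over $Z_1$ it is a closed immersion of equidimensional smooth schemes over a field, hence open and in particular flat, so by the fibral criterion $Z_2\to Y$ is flat, thus \'etale, and an \'etale closed immersion is an open immersion. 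Shrinking $X_2'$ (equivalently $X_2$) to the open image of $Z_2$ yields $Z_2=X_2'\times_{X_1}Z_1$, which is precisely cartesianness.

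The main obstacle I expect is the simultaneous choice of the $\phi_i$ over the whole finite set $S$: pointwise the differential computation is clean, but producing a single global system of $N-r$ functions in $I$ with independent relative differentials at every point of $S$ is where the semi-local bookkeeping enters and where the hypothesis that $S$ lies in an affine scheme is used. One must also track the two successive shrinkings of $X_2$---first to secure smoothness of $X_2'$, then to upgrade the open immersion $Z_2\hookrightarrow Y$ to an equality---so that the final $Z_2$, $X_2$ and $X_2'$ are compatible neighbourhoods of $S$.
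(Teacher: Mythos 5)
Your proposal is correct and takes essentially the same route as the paper: like Bloch--Ogus, you cut $X_2'$ out of $X_2$ by functions from the ideal of $Z_2$ whose relative differentials over $X_1$ are independent at the points of $S$ (your kernel $K$ of $i_2^*\Omega^1_{X_2/X_1}\to\Omega^1_{Z_2/Z_1}$ is exactly the paper's $\bar{\cal I}/\bar{\cal I}^2$ for $Y=f^{-1}(Z_1)$, and the semi-local lifting over the affine containing $S$ is the paper's choice of the sections $f_1,\dots,f_r$), concluding smoothness of $f'$ by the Jacobian criterion after shrinking. The only divergence is the final cartesianness step: the paper applies Nakayama to see that the chosen functions generate $\bar{\cal I}$ near $S$, giving $Z_2=X_2'\cap f^{-1}(Z_1)$ scheme-theoretically at once, whereas you show the closed immersion $Z_2\hookrightarrow X_2'\times_{X_1}Z_1$ is \'etale via the fibral flatness criterion and hence an open immersion and then shrink once more---slightly heavier machinery, but equally valid.
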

\begin{proof}
Let $Y=f^{-1}(Z_1)\subset X_2$. Let $\cal I$ be the ideal defining $Z_2$ as a subscheme of $X_2$ and $\bar{\cal I}$ be the ideal defining $Z_2$ as a subscheme of $Y$. 
\[
  \xymatrix{ 
  Z_2 \ar@{^{(}->}[r]^{\bar{\cal I}} \ar@{^{(}->}@/^2pc/[rr]^{\cal I} \ar[dr]_{\mathrm{sm}} & Y \ar[d]^{\mathrm{sm}} \ar@{^{(}->}[r] & X_2 \ar[d] \\
   & Z_1 \ar@{^{(}->}[r] & X_1
   }
\]
Then since $Y$ and $Z_2$ are smooth over $Z_1$ there is a short exact sequence 
$$0\to \bar{\cal I}/\bar{\cal I}^2\to \Omega^1_{Y/Z_1}\otimes_{\cal O_{Y}} \cal O_{Z_2}\to \Omega^1_{Z_2/Z_1}\to 0$$
of locally free sheaves on $Z_2$. 
Replacing $X_2$ if necessary by an open neighbourhood of $T$, we can find sections $f_1,...,f_r$ of $\cal I$ whose images $\bar f_1,...,\bar f_r$ in $\bar{\cal I}$ form a basis of $\bar{\cal I}/\bar{\cal I}^2$. Let $X_2'\subset X_2$ be the subscheme defined by $f_1=...=f_r=0$. 

By base change we have that $\Omega^1_{Y/Z_1}\cong \Omega^1_{X_2/X_1}\otimes_{\cal O_{X_2}} \cal O_{Y}$, which
implies that the differentials $df_1,...,df_r\in \Omega^1_{X_2/X_1}$ are independent in some neighbourhood of $T$. This implies that after shrinking, $X_2'$ is smooth over $X_1$ since $\mathrm{dim}\Omega^1_{X_2'/X_1}=\mathrm{dim}\Omega^1_{X_2/X_1}/< df_1,...,df_r>\leq \mathrm{dim}X_2'-\mathrm{dim}X_1$. Now it follows from Nakayama's lemma that the $\bar f_1,...,\bar f_r$ generate $\bar{\cal I}$ in some neighbourhood of $T$ and therefore $\cal I$ is generated by $f_1,...,f_r$ and the sheaf of ideals $\cal I_Y$ defining $Y$ in $X_2$. This means that the lower square in the statement of the lemma is Cartesian, indeed taking a quotient by $f_1,...,f_r$ identifies $Z_2$ and $Y$ in $X_2'$. 
\end{proof}

\begin{Key lemma}\label{key_lemma_smooth_case}\cite[Prop. 4.5]{BO74}
Let $S$ be the spectrum of an excellent discrete valuation ring $\cal O_K$. Let $(H_*,H^*)$ be a homology theory with duality on $\cal C_S$ satisfying the principal triviality Property \ref{property_local_principality}. 
Let $i\colon Z_1\hookrightarrow Z_2$ be a closed immersion of affine schemes in $\cal C_S$ and $\pi\colon Z_2\to Z_1$ be a section of $i$ which is smooth of relative dimension $1$ at a finite set $T$ of points in $Z_2$. Then $i$ is homologically effaceable at $T$.
\end{Key lemma}
\begin{proof}
We can assume that $Z_1$ is flat over $S$. Otherwise we are in the situation of \cite[Prop. 4.5]{BO74}. Since $Z_1$ and $Z_2$ are affine we can find a commutative diagram
\[
  \xymatrix{ 
  Z_2 \ar@{^{(}->}[r]^{i_2}  \ar[d]_\pi & X_2 \ar[d]^{f}  \\
   Z_1\ar@{^{(}->}[r]^{i_1} \ar[r] \ar@/^1pc/[u]^i & X_1 
   }
\]
with $X_1$ and $X_2$ smooth over $S$, $f$ smooth and $i_1$ and $i_2$ closed immersions. By Lemma \ref{lemma_cartesian_square} we can find a neighbourhood $U$ of $T$ in $X_2$ and a closed subscheme $X_2'\subset U\cap X_2$ containing $Z_2'=Z_2\cap U$ such that $f'\colon X_2'\to X_1$ is smooth of relative dimension $1$, $T\subset Z_2'$ and such that the square
\[
  \xymatrix{ 
  Z_2' \ar@{^{(}->}[r]^{i_2}  \ar[d]_{\pi'} & X_2' \ar[d]^{f'}  \\
   Z_1\ar@{^{(}->}[r]^{i_1} \ar[r]  & X_1 
   }
\]
is Cartesian. Let $Z_1'=i^{-1}(Z_2')$. Then the map $\alpha\colon Z_1'\to Z_2'$ induced by $\pi'$ is an open immersion and we get a commutative square 
\begin{equation}\label{com_diag_1}
  \xymatrix{ 
  Z_1' \ar@{^{(}->}[r]^{i'} \ar[rd]_\alpha & Z_2' \ar@{^{(}->}[r]^{i_2}  \ar[d]_{\pi'} & X_2' \ar[d]^{f'}  \\
   & Z_1\ar@{^{(}->}[r]^{i_1} \ar[r]  & X_1. 
   }
\end{equation}
Applying Lemma \ref{lemma_cartesian_square} to the outer square of the diagram (\ref{com_diag_1}), we get a commutative diagram
\[
  \xymatrix{ 
  Z_1' \ar@{^{(}->}[r]^{}  \ar[d]_{\alpha} &W \ar@{^{(}->}[r]^{h} \ar[d]_{g} &  X_2' \ar[dl]^{f'}  \\
   Z_1\ar@{^{(}->}[r]^{} \ar[r]& X_1 &
   }
\]
in which the square on the left is Cartesian. Since $f'$ and $g$  are smooth, $h$ is a regular immersion and since $g$ is of relative dimension $0$, this implies that $W$, also smooth over $S$, is a smooth Cartier divisor on $X'_2$. Shrinking $X_2'$ further around $T$, we may assume that $W$ is a principle Cartier divisor.

Putting these diagrams together, we get a commutative diagram
\[
  \xymatrix{ 
  Z_1' \ar[d] \ar@/_2pc/[dd]_\alpha \ar[r] & W \ar[d]^h \ar@/^2pc/[dd]^g  \\
  Z_2' \ar@{^{(}->}[r]^{i_2}  \ar[d]_{} & X_2' \ar[d]^{f'}  \\
   Z_1\ar@{^{(}->}[r]^{i_1} \ar[r]  & X_1 
   }
\]
in which $\alpha$ is an open immersion and $g$ is \'etale.

Let $d=\dim X_1=\dim W$ (and therefore $\dim X_2'=d+1$). Consider the commutative diagram
\[
  \xymatrix{ 
  H^q_{Z_1}(X_1,n) \ar[rr]^{f^*} \ar[rd]^{g^*} \ar[d]^{\cong}_{\cap \eta_{X_1}} & &   H^q_{Z_2'}(X_2',n) \ar[dl]_{h^*} \ar[d]^{\cap h_*\eta_W}  \\
   H_{2d-q}(Z_1,d-1-n)\ar[rd]^{\alpha^*} & H^q_{Z_1'}(W,n) \ar[d]^{\cap \eta_W} & H_{2d-q}(Z_2',d-1-n) \\
   & H_{2d-q}(Z_1',d-1-n) \ar[ur]^{i_*'}  &
   }
\]
in which the bottom left square commutes by Lemma \ref{lemma_compatibility_fundamental_class_etale_pullback} and the triangle at the top commutes since $g=h\circ f$. The bottom right square commutes by the projection formula.

Now $\cap \eta_{X_1}$ is an isomorphism by Poincar\'e Duality and $\cap h_*\eta_W=\cap 0=0$. Indeed, the right vertical map is given by $H^q_{Z_2'}(X_2',n)\times H_{2d}(X_2',d-1)\xto{-\cap h_*\eta_W} H_{2d-q}(Z_2',d-1-n)$ and the image of $\eta_W$ under the map $h_*\colon H_{2d}(W,d-1) \to H_{2d}(X_2',d-1)$ is zero by Property \ref{property_local_principality} since $W$ is a smooth and locally principle divisor. This and the commutativity implie that $i_*'\alpha^*=0$. But now the commutative diagram
\[
  \xymatrix{ 
  Z_1\cap Z_2'=Z_1' \ar@{^{(}->}[r]^-{\alpha}  \ar[d]_{i'} & Z_1 \ar[d]^{i}  \\
   Z_2\cap U= Z_2' \ar@{^{(}->}[r]^-{\beta} \ar[r]  & Z_2, 
   }
\]
in which $\alpha$ and $\beta$ are open, implies that the diagram
\[
  \xymatrix{ 
  H_{2d-q}(Z_1',d-n) \ar[d]_{i_*'} & H_{2d-q}(Z_1,d-n) \ar[d]^{i_*} \ar[l]_{\alpha^*}  \\
   H_{2d-q}(Z_2',d-n)   & H_{2d-q}(Z_2,d-n) \ar[l]^{\beta^*}
   }
\]
commutes and therefore that the composition
$$H_{2d-q}(Z_1,d-n)\xto{i_*}H_{2d-q}(Z_2,d-n)\xto{\beta^*}H_{2d-q}(Z_2',d-n)$$
is zero which means that $i$ is homologically effaceable at $T$. 
\end{proof}

\subsection{Proof of the smooth case}\label{section_smooth_case}

\begin{corollary}\label{theorem_smooth_case}
Let $S=\Spec\cal O_K$ be the spectrum of an excellent discrete valuation ring $\cal O_K$ and $X$ be a smooth scheme over $S$. Let $(H_*,H^*)$ be a homology theory with duality on $\cal C_S$ satisfying the principal triviality Property \ref{property_local_principality} and the lifting Property \ref{property_lifting}. Let $\cal H^*(n)$ be the sheafification of the presheaf $U\mapsto H^*(U,n)$.
Then the sequence of sheaves
\[0\to \cal H^q(n) \to \bigoplus_{x\in X^{(0)}}i_{*,x}H^{q}(x,n)\to \bigoplus_{x\in X^{(1)}}i_{*,x}H^{q-1}(x,n-1)\to...\to \bigoplus_{x\in X^{(d)}}i_{*,x}H^{q-d}(x,n-d)\to 0 \]
is exact.
\end{corollary}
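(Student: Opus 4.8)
The plan is to deduce the corollary from the Key lemma \ref{key_lemma_smooth_case} by feeding it into the machinery of Section \ref{section_general_reductions}. By the general reductions, the sequence $(\ast)$ is exact as soon as the natural map $(\ast\ast)\colon \cal H^q_{Z^{p+1}}(n)\to \cal H^q_{Z^p}(n)$ vanishes for all $p,q,n$, and this in turn follows once we know that for every affine open $W\subseteq X$ and every $Z_1\in Z^{q+1}(W)$ there is, after shrinking around a given finite set of points, a $Z_2\in Z^{q}(W)$ together with a locally effaceable morphism $Z_1\to Z_2$. Since exactness is a statement about stalks, I would first pass to an affine neighbourhood of a point and localise freely, so that the hypotheses of the Key lemma (affineness of the schemes involved, a finite set $S$ of points) are available. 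The whole proof then reduces to producing, for a closed $Z_1$ of codimension $q+1$, a closed subscheme $Z_2\supseteq Z_1$ of codimension $q$ carrying a retraction $\pi\colon Z_2\to Z_1$ that is smooth of relative dimension $1$ at the chosen points: the Key lemma then gives effaceability of $Z_1\hookrightarrow Z_2$ and hence $(\ast\ast)=0$.

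The construction of $Z_2$ is the geometric heart and is an instance of ``Quillen's trick'' in the form used by Gillet and Levine \cite{GL87}. I would fix a locally closed embedding $W\hookrightarrow \mathbb{A}^N_S$ and, after shrinking $W$ around the finite set of points, use Noether normalisation to arrange a projection under which $Z_1$ maps finitely onto a smooth subscheme of one lower dimension. Thickening $Z_1$ in the remaining coordinate direction produces a candidate relative curve $Z_2$ inside $X$ containing $Z_1$, and Lemma \ref{lemma_cartesian_square} is exactly the tool that lets one cut $Z_2$ down so that the relevant square becomes cartesian while keeping the projection $Z_2\to Z_1$ smooth of relative dimension $1$. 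For a $Z_1$ that is flat over $S$ this goes through essentially as in the equal-characteristic situation of \cite{BO74}, and one obtains the desired effaceable morphism directly from the Key lemma.

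The main obstacle, and the point where the mixed-characteristic situation genuinely differs from the case of a field, is the treatment of cycles supported in the special fibre. If $Z_1$ is not flat over $S$, i.e. $Z_1$ is contained in the special fibre $X_s$, then any $S$-retraction $\pi\colon Z_2\to Z_1$ forces the parameter $\pi_S$ of $\cal O_K$ to vanish on $Z_2$ as well, so that $Z_2$ is again contained in $X_s$. Such a $Z_2$ has codimension $q+1$ rather than $q$ in $X$, so the Key lemma alone can only efface a vertical cycle \emph{inside} $X_s$ and never moves it off the special fibre into a horizontal subscheme of the correct codimension in $X$. The remedy is the lifting Property \ref{property_lifting}: the surjectivity of the boundary map $\partial\colon H^a(R[\tfrac1t],n)\to H^{a-1}(R/t,n-1)$ applied to the local rings along $X_s$ allows one to realise a class supported on a vertical cycle as a boundary of a class living on the generic fibre, and thereby to lift it onto a horizontal cycle of codimension $q$. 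I expect verifying this lifting step — and checking that it dovetails with the splicing of the sequences in Section \ref{section_general_reductions} so that $(\ast\ast)$ vanishes uniformly across both the horizontal and the vertical ranges — to be where the real work lies. Combining the horizontal case handled by the Key lemma with the vertical case handled by the lifting property yields $(\ast\ast)=0$ for all $p,q,n$, and the general reductions then give the asserted exactness of $(\ast)$.
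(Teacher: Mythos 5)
Your overall skeleton does match the paper's proof: reduce to the vanishing of $(\ast\ast)$, produce effacements via Quillen's trick in the form of \cite{GL87} together with the Key lemma \ref{key_lemma_smooth_case}, and invoke the lifting Property \ref{property_lifting} for the vertical part. But you draw the line between the two cases in the wrong place, and this creates a genuine gap. Your codimension count for vertical cycles is incorrect: if $Z_1$ is contained in the special fiber $X_s$ and has codimension $q+1$ in $X$, then it has codimension $q$ in $X_s$, and a relative curve $Z_2\supset Z_1$ contained in $X_s$ has dimension $\dim Z_1+1$, hence codimension exactly $q$ in $X$ --- not $q+1$ as you claim. So for vertical cycles of codimension $\geq 2$ there is no obstruction at all: such a $Z_1$ is, locally around the point in question, contained in a divisor of $X$ that is flat over $S$ (prime avoidance: its ideal has height $\geq 2$, so it is not contained in the height-one primes of the components of $X_s$), so the presentation lemma of \cite{GL87} still applies --- this is precisely the remark made in the paper's proof --- and the Key lemma, whose own proof reduces the non-flat case to \cite[Prop. 4.5]{BO74} over the residue field, effaces $Z_1$ into a (vertical) subscheme of the correct codimension $q$. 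The only case where your retraction argument really bites is a vertical \emph{divisor}: a relative curve over it would have dimension $\dim X$ while being contained in $X_s$, which is impossible; equivalently, a vertical prime divisor is contained in no flat divisor, so \cite{GL87} cannot be applied to it.

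Correspondingly, your proposed remedy overloads Property \ref{property_lifting}. As stated it is an intrinsically codimension-one statement: surjectivity of $\partial\colon H^a(R[\tfrac1t],n)\to H^{a-1}(R/t,n-1)$ for a divisor $R/t$ in $\Spec R$. The paper uses it only at the generic point $\eta$ of the special fiber, with $R=\roi_{X,\eta}$ and $t=\pi$: surjectivity of $\partial\colon H^{q+1}(K(X),n+1)\to H^{q}(k(\eta),n)$ gives, via the localization sequence, injectivity of $H^q(\roi_{X,\eta},n)\to H^q(K(X),n)$; combining this with the effaceability of flat divisors (which gives injectivity of $H^q(\roi_{X,x},n)\to H^q(\roi_{X,\eta},n)$, i.e.\ the vanishing of $\cal H^q_{Z^1-\eta}(n)\to \cal H^q_{Z^0}(n)$ at $x$) yields exactness at the zeroth spot. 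Your sketch of ``realising a class supported on a vertical cycle as a boundary of a class on the generic fibre and lifting it onto a horizontal cycle of codimension $q$'' for vertical cycles of \emph{arbitrary} codimension has no mechanism behind it --- Property \ref{property_lifting} says nothing about supports of codimension $\geq 2$ --- and, by the corrected bookkeeping above, is not needed: the higher-codimension vertical cycles are already handled by Quillen's trick, and the lifting property is needed exactly once, for the vertical divisor.
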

\begin{proof}
Let $x\in X$. We need to show that the map $(**)$ is zero at $x$. In order to simplify notation let $X$ be an open neighbourhood of $x$. Let $Z'\in Z^{p+1}(X),x\in Z',$ be flat over $\cal O_K$ or $p>0$. Then up to shrinking $X$ by \cite[Lem. 1]{GL87} there exists a smooth morphism of relative dimension $1$
$$g\colon X\to A$$ such that the restriction $f:=g|_{Z'}\colon Z'\to A$ is quasi-finite. Note that if $Z'$ is not flat over $\roi_K$ but of codimension $>1$, then \cite[Lem. 1]{GL87} still applies since $Z'$ is contained in a flat divisor of $X$. Let $Z'':=Z'\times_A X$ and $i\colon Z'\to Z''$ be the induced closed immersion. By Zariski's main theorem $f'$ factorises into an open immersion $Z''\to \bar{Z}''$ and a finite map $\bar{f}'\colon \bar{Z}''\to X$.  There is commutative diagram
\[
  \xymatrix{ 
  Z' \ar@{^{(}->}[rrd] \ar[ddr]  \ar[dr]^i & & \bar{Z}'' \ar[d]^{\bar f'}  \\
   & Z''\ar[r]^{f'} \ar[d]^{g'} \ar[ru] & X \ar[d]^g
  \\
 & Z'   \ar[r]_{f} & A.
  }
\]
Since $g$ is smooth at $x$ and of relative dimension $1$, $g'$ is smooth of relative dimension $1$ at the points of $S'=f'^{-1}(x)$. Furthermore, since $\bar f'$ is finite, $Z=\bar f'(\bar{Z}'')\in Z^p(X)$ and $Z'\subset Z$. By Lemma \ref{key_lemma_smooth_case} $i$ is homologically effaceable at $x$, that is the there is an open neighbourhood $U''$ of $S'$ in $Z''$ such that the composition
$$H_q(Z',n)\to H_q(Z'',n)\to H_q(U'',n)$$
is zero. This implies that the composition
$$H_q(Z',n)\to H_q(\bar Z'',n)\to H_q(U'',n)$$
is zero. Since $\bar f'$ is finite, we can find a neighbourhood $U$ of $x$ in $X$ such that $U'=\bar f'^{-1}(U)\subset U''$. The commutative diagram
\[
  \xymatrix{ 
   H_q(Z',n) \ar[r] \ar@{=}[d] \ar@/^2pc/[rr]^0 & H_q(\bar Z'',n)\ar[r]^{} \ar[d]^{\bar f'_*}  & H_q(U'',n) \ar[r] & H_q(U',n) \ar[dl]^{\bar f'_*}
  \\
 H_q(Z',n) \ar[r] & H_q(Z,n)   \ar[r]_{} & H_q(U,n) &.
  }
\]
implies the effacibility of the map $Z'\to Z$ for subschemes $Z'$ which are flat over $\cal O_K$ or of codimension $>1$. This implies that the map $$(**)\colon\cal H^q_{Z^{p+1}}(n)\to \cal H^q_{Z^p}(n)$$ is zero for all $p>0$ and arbitrary $q$.

It remains to show that $$(**)\colon\cal H^q_{Z^{1}}(n)\to \cal H^q_{Z^0}(n)$$ is zero. This map fits into the exact sequence $\cal H^q_{Z^{1}}(n)\to \cal H^q_{Z^0}(n)\to \cal H^q_{Z^0/Z^1}(n)$ (see Section \ref{section_general_reductions}). By definition $\cal H^q_{Z^0}(n)_x\cong H^q_{}(\roi_{X,x},n)$ and $\cal H^q_{Z^0/Z^1}(n)_x\cong H^q_{}(K(X),n)$. We can therefore also show that the composition  $ H^q_{}(\roi_{X,x},n)\to H^q_{}(\roi_{X,\eta},n) \to H^q_{}(K(X),n)$ is injective, where $\eta$ is the generic point of the special fiber of $X$. The first map is obtained by localising the exact sequence $\cal H^q_{Z^1-\eta}(n)\to \cal H^q_{Z^0}(n) \to \cal H^q_{Z^0/(Z^1-\eta)}(n)$ at $x$ and by what we showed above the first map in this sequence is zero.  
The map $H^q_{}(\roi_{X,\eta},n) \to H^q_{}(K(X),n)$ fits into a long exact sequence
$$..\to H^{q+1}_{}(K(X),n+1)\xto{\partial} H^{q}_{}(k(\eta),n) \to H^q_{}(\roi_{X,\eta},n) \to H^q_{}(K(X),n)\xto{\partial}  H^{q-1}_{}(k(\eta),n-1)\to..\; .$$ 
But $\partial$ is surjective by the lifting Property \ref{property_lifting}. 

\end{proof}

\subsection{An application to the Galois symbol map}

\begin{corollary}\label{corollary_galois_symbol}
Let $\cal O_K$ be an excellent discrete valuation ring with residue field $k$ and $R$ be a local $\cal O_K$-algebra. Let $\ell\in \bb N_{>0}$ be coprime to $\mathrm{char}(k)$. Then the Galois symbol map
$$\hat{K}^M_q(R)/\ell\longrightarrow H^q_\sub{\'et}(R,\mu_\ell^{\otimes q} )$$
is surjective. In particular, the graded ring $$\bigoplus_{q\in \bb N}H^q_\sub{\'et}(R,\mu_\ell^{\otimes q} )$$ is generated in degree $1$.
\end{corollary}
\begin{proof}
Assume first that $R$ is smooth. Then the first assertion follows from the commutative diagram with exact rows
\[
  \xymatrix{ 
 & \hat{K}^M_q(R)/\ell  \ar[r] \ar[d]   & {K}^M_q(\Frac(R))/\ell \ar[r]^{} \ar[d]^{\cong}  & \ar[d]^{\cong} \bigoplus_{x\in\Spec(R)^{(1)}} {K}^M_{q-1}(R)/\ell \ar[r] &  \dots
  \\
0  \ar[r] & H^q_\sub{\'et}(R,\mu_\ell^{\otimes q} ) \ar[r] & H^q_\sub{\'et}(\Frac(R),\mu_\ell^{\otimes q} )  \ar[r]_{} & \bigoplus_{x\in\Spec(R)^{(1)}} H^{q-1}_\sub{\'et}(k(x),\mu_\ell^{\otimes q-1} ) \ar[r]_{}  & \dots.
  }
\]
The first row is exact by \cite[Thm. 1.1]{Lu20}. Indeed, the proofs of \textit{loc. cit.} go through in exactly the same way with finite coefficients. The second row is exact by Corollary \ref{theorem_smooth_case}. The vertical arrows from the second one onwards are isomorphisms by the Bloch-Kato conjecture proven by Rost and Voevodsky. This implies that the first vertical map is surjective. 

Let now $R$ be an arbitrary local $\cal O_K$-algebra and let $A\to R$ be a henselian surjection from an ind-smooth local $\cal O_K$-algebra $A$. Consider the commutative diagram
\[
  \xymatrix{ 
  & \hat{K}^M_q(A)/\ell \ar[r]^{} \ar@{->>}[d]  & \ar[d] \hat{K}^M_q(R)/\ell   
  \\
 & H^q_\sub{\'et}(A,\mu_\ell^{\otimes q} )  \ar[r]^-{\cong} & H^q_\sub{\'et}(R,\mu_\ell^{\otimes q} ).
  }
\]
The left  vertical map is surjective by the last paragraph and since both \'etale cohomology and improved Milnor K-theory commute with filtered colimits of rings. The bottom map is an isomorphism by Gabber's affine analogue of the proper base change theorem \cite{Gabber1994}. In sum, we get that the map $\hat{K}^M_q(R)/\ell\to \hat{K}^M_q(R)/\ell $ is surjective.

Since the map ${K}^M_q(R)/\ell\to \hat{K}^M_q(R)/\ell$ is surjective by \cite[Thm. B]{Ke10}, the composition ${K}^M_q(R)/\ell\to \hat{K}^M_q(R)/\ell\to H^q_\sub{\'et}(R,\mu_\ell^{\otimes q} ) $ is surjective implying the second assertion.
\end{proof}

We include the following proposition, which is stronger than the surjectivity needed in the previous proof, for the convenience of the reader. It says that Milnor K-theory with finite invertible coefficients is rigid (for the precise meaning of rigid see the proposition). This is probably well-known to the expert and generalises \cite[Lem. 3.2]{Da18}, whose proof we follow closely.
\begin{proposition}\label{proposition_rigidity_KM}
Let $k$ be a commutative ring. Let $R$ be an arbitrary local $k$-algebra and let $A\to R$ be a henselian surjection from an ind-smooth local $k$-algebra $A$. Let $\ell\ge 0$ an integer invertible in $R$ and $A$. Then the natural map
$${K}^M_q(A)/\ell\to {K}^M_q(R)/\ell$$
is an isomorphism.
\end{proposition}
\begin{proof}

The statement is trivial for $q=0$. For $q=1$ it follows from Hensel's lemma\footnote{That $A\to R$ is a henselian surjection means that it is surjective and that $(A,I=\ker(A\to R))$ is a henselian pair. The property defining such a pair, which we use here, is the following: Given a polynomial $f(x)\in A[x]$ and a root $\bar\alpha\in A/I$ of $\bar f\in (A/I)[x]$ with $\bar f'(\bar \alpha)\in (A/I)^\times$, then $\bar\alpha$ lifts to a root $\alpha\in A$ of $f$.} that $A^\times/\ell\xto{\cong} R^\times/\ell$. Let $q\geq 2$. There is a commutative diagram with exact rows
\[
  \xymatrix{ 
0 \ar[r] &  \mathrm{StR}_qA^\times \ar[r] \ar[d]   & \mathrm{T}_qA^\times \ar[r]^{} \ar[d]^{\cong}  & K^M_q(A)\ar[d]^{}  \ar[r] & 0 
  \\
0  \ar[r] & \mathrm{StR}_qR^\times  \ar[r] & \mathrm{T}_qR^\times  \ar[r]_{} &   K^M_q(R)  \ar[r]_{}  &  0,
  }
\]
where $\mathrm{T}_qA^\times= (A^\times)^{\otimes q} $ and $\mathrm{T}_qR^\times= (R^\times)^{\otimes q} $. 
$\mathrm{StR}_qA^\times=\langle x_1\otimes\dots\otimes x_q\in \mathrm{T}_qA^\times\mid \exists i\neq j: x_i+x_j=1\rangle $ and $\mathrm{StR}_qR^\times=\langle x_1\otimes\dots\otimes x_q\in \mathrm{T}_qR^\times\mid \exists i\neq j: x_i+x_j=1\rangle $ are the Steinberg groups.
Tensoring with $\bb Z/\ell$ we get the commutative diagram with exact rows
\[
  \xymatrix{ 
   \mathrm{StR}_qA^\times/\ell \ar[r] \ar[d]^\alpha   & \mathrm{T}_qA^\times/\ell \ar[r]^{} \ar[d]^{\cong}  & K^M_q(A)/\ell\ar[d]^{}  \ar[r] & 0 
  \\
  \mathrm{StR}_qR^\times/\ell  \ar[r] & \mathrm{T}_qR^\times/\ell  \ar[r]_{} &   K^M_q(R)/\ell  \ar[r]_{}  &  0,
  }
\]
in which the middle vertical map is an isomorphism since $\mathrm{T}_qA^\times/\ell\cong \mathrm{T}_q(A^\times/\ell)\cong \mathrm{T}_q(R^\times/\ell)\cong \mathrm{T}_qR^\times/\ell$. Therefore, by the five-lemma, it suffices to show that $\alpha$ is surjective. Let $r_1\otimes\dots\otimes r_q\in \mathrm{StR}_qR^\times$. Without loss of generality we may assume that $q=2$ and $r_1+r_2=1$. Let $a_1,a_2\in A^\times$ map to $r_1$ and $r_2$ respectively. Then $a_1+a_2=:u\in 1+I$, where $I=\ker(A\to R)$. Note that $I\subset \mathrm{Jac}(A)$ implies that $1+I\subset A^\times$. Therefore $u^{-1}(a_1+a_2)= u^{-1}a_1+u^{-1}a_2=1$ and $u^{-1}a_1\otimes u^{-1}a_2\in \mathrm{StR}_2A^\times$ maps to $r_i\otimes r_2\in \mathrm{StR}_2R^\times$ under $\alpha$.
\end{proof}

\section{The Gersten conjecture: the semi-stable case}\label{section_semi_stable_case}
In this section we study the Gersten conjecture in the semi-stable case \'etale locally. \'Etale locally a \textit{semi-stable} scheme is of the form 
$$\Spec(\roi_K[t_1,...,t_d]/(t_1\cdot...\cdot t_{e}-\pi))$$
for $e\in \bb Z$ and $1\leq e\leq d$.

We will use the following notation:
\begin{enumerate}
\item[$\bullet$] If $X$ is a Noetherian scheme and $V$ a closed subscheme, then we denote the blow-up of $X$ along $V$ by $\tilde{X}$.
\item[$\bullet$] If $W$ is a closed subscheme of $X$, then we denote the strict transform of $W$ by $\tilde{W}$.
\item[$\bullet$] We denote the exceptional divisors of $\pi_X\colon\tilde{X}\to X$ and $\pi_W\colon\tilde{W}\to W$ by $E_X$ and $E_W$ respectively. 
\end{enumerate}

In this section we drop the twist in the homology theory from notation. Our main theorem is the following:
\begin{theorem}\label{theorem_semi_stable}
Let $\cal O_K$ be a henselian discrete valuation ring with local parameter $\pi$ and residue field $k$. Let $(H_*,H^*)$ be a homology theory with duality satisfying the blow-up Property \ref{property_blowup_formula}, the principal triviality Property \ref{property_local_principality} and the lifting Property \ref{property_lifting}. 
Let $1\leq e\leq d, d,e\in \mathbb{N},$ and
$$X=\Spec(\roi_K[t_1,...,t_d]/(t_1^{}\cdot...\cdot t_{e}^{}-\pi)).$$
Let $\cal H^*(n)$ be the sheafification of the presheaf $U\mapsto H^*(U,n)$ on $X$ in the Zariski topology. Assume that $d=2$ and that for every open subset $W\subset X$ containing $\Spec(\roi_K[t_1,...,t_d]/(t_1^{}t_{2}^{}-\pi,t_1))\cong \bb A^1_k$ the morphism $ \bb A^1_k\to W$ is homologically effaceable at the origin. Then the complex of sheaves
\begin{equation}\label{equation_Gersten_complex}
0\to \cal H^q(n) \to \bigoplus_{x\in X^{(0)}}i_{*,x}H^{q}(x,n)\xto{\partial_1} \bigoplus_{x\in X^{(1)}}i_{*,x}H^{q-1}(x,n-1)\xto{\partial_0} \bigoplus_{x\in X^{(2)}}i_{*,x}H^{q-2}(x,n-2)\to 0 
\end{equation}
is exact.
\end{theorem}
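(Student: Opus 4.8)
The plan is to argue by induction on the number $a$ of components of the special fibre. When $a=1$ the scheme $X=\Spec(\roi_K[t_1,\dots,t_d]/(t_1-\pi))\cong\bb A^{d-1}_{\roi_K}$ is smooth over $S$, so Corollary~\ref{theorem_smooth_case} applies and in fact the whole Gersten complex $(*)$ is exact; this is the base case. For the inductive step I would first translate the assertion into the language of Section~\ref{section_general_reductions}: since a semi-stable $X$ is regular, absolute purity identifies the three terms of \eqref{equation_Gersten_complex} with the graded pieces $\cal H^\bullet_{Z^p/Z^{p+1}}$, and the splicing argument there shows that exactness of the tail at the codimension-$d$ and codimension-$(d-1)$ spots is equivalent to the vanishing of the maps $(**)$ in the two top codimensions. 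Concretely, this amounts to local effaceability of codimension-$d$ points into curves and of curves into surfaces.

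Away from the crossing locus $\Sigma=\bigcup_{i<j}V(t_i,t_j)$ the morphism $X\to S$ is smooth, so over the open set $X\setminus\Sigma$ the required effaceability is produced exactly as in the proof of Corollary~\ref{theorem_smooth_case}: one applies \cite[Lem.~1]{GL87} to obtain a smooth projection of relative dimension one and then invokes Key Lemma~\ref{key_lemma_smooth_case}, whose proof only uses the local principality Property~\ref{property_local_principality}. The surjectivity of $\partial_0$ onto the closed-point term is handled uniformly by the lifting Property~\ref{property_lifting}: through each closed point $x$ one chooses a curve $C$ whose local ring $R=\roi_{C,x}$ and uniformiser $t$ satisfy the hypotheses of that property, so that the residue $H^{*}(R[\tfrac1t])\to H^{*-1}(R/t)=H^{*-1}(x)$ is onto. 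Thus the genuine difficulty is confined to the subvarieties lying in $\Sigma$, i.e. to the strata where two or more of the $t_i$ vanish and $X$ fails to be smooth over $S$.

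To treat these strata I would use blow-ups together with the descent Property~\ref{property_blowup_formula}. Blowing up $X$ along a component of the crossing locus produces $\pi_X:\tilde X\to X$ whose exceptional divisor $E_X$ is a projective bundle over a smooth centre, hence smooth over $k$, and for which Property~\ref{property_blowup_formula} yields a split short exact sequence $0\to\tilde H_q(E_X)\to H_q(\tilde X)\xto{\pi_{X*}}H_q(X)\to 0$. The section of $\pi_{X*}$ lets one transport an effacement built upstairs back down to $X$; on $\tilde X$ the strict transform of the offending curve meets fewer of the original components, so the smooth case and the inductive hypothesis for the parameter $a-1$ apply to the two summands separately. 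Local principality is again what makes the relevant cup product $\cap\,\pi_*\eta$ vanish, just as in Key Lemma~\ref{key_lemma_smooth_case}. Combining the smooth-locus effacements with those obtained by descent yields the vanishing of $(**)$ in the top two codimensions, hence the exactness of \eqref{equation_Gersten_complex}.

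I expect the main obstacle to be the blow-up bookkeeping near the deep strata. A single blow-up of a corner $V(t_i,t_j)$ already introduces a multiplicity-two exceptional component, so the blown-up total space is regular but no longer semi-stable; one must therefore choose the centres, and possibly iterate the blow-ups, so that each exceptional divisor is smooth over $k$ and each strict transform genuinely lowers $a$, in a way compatible with the splittings of Property~\ref{property_blowup_formula}. Upgrading these split sequences for total homology into a statement about the Gersten \emph{complexes}, and verifying that the induction on $a$ closes after the descent, is the technical heart of the argument.
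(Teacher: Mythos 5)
Your overall framework (induction on $a$, base case from Corollary \ref{theorem_smooth_case}, reduction to effaceability, blow-ups plus Property \ref{property_blowup_formula}) is the right one, but your inductive step has a genuine gap, and it is exactly the point you flag at the end and then defer as ``bookkeeping''. You blow up the scheme with $a$ components along a corner $V(t_i,t_j)$ and claim that upstairs the strict transforms meet fewer components, so that ``the inductive hypothesis for the parameter $a-1$'' applies. It does not: if $t_i$ and $t_j$ are both factors of $\pi$, then in the chart $t_i=t_ju$ the equation becomes $u\,t_j^2\,\prod_{l\neq i,j}t_l=\pi$, so the exceptional divisor enters the special fibre with multiplicity two. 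The blown-up scheme is regular but is not of the standard form $\Spec(\roi_K[t_1,\dots,t_d]/(t_1\cdots t_{a-1}-\pi))$, and the inductive hypothesis -- which is stated only for that form -- applies neither to it nor to any of its open subsets. This cannot be repaired by choosing centres more carefully or iterating: blow-ups of the total space never decrease multiplicities in the special fibre, so once a multiple component appears you can never return to the (reduced) semi-stable situation. The induction as you have set it up never closes, and this is the heart of the matter, not a technicality.

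The paper's proof runs the blow-up in the opposite direction. One starts from $X=\Spec(\roi_K[t_1,\dots,t_d]/(t_1\cdots t_{a-1}-\pi))$, where the inductive hypothesis \emph{holds}, and blows up the ideal $I=(t_{a-1},t_a)$ -- crucially, the intersection of one component of the special fibre with a \emph{horizontal} divisor, not of two special-fibre components. The chart $t_{a-1}=t_aV$ of $\tilde X$ is then precisely $\Spec(\roi_K[t_1,\dots,t_{a-2},t_a,V,t_{a+1},\dots,t_d]/(t_1\cdots t_{a-2}\,t_a V-\pi))$, i.e.\ the standard scheme with $a$ components, with no multiplicities appearing; so it suffices to prove effaceability at points of that chart of $\tilde X$. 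Effaceability is then transported \emph{up} from the base to the blow-up, not down as you propose: given a horizontal curve $Z'\subset\tilde X$ through $x$, one effaces its image $\pi_X(Z')$ downstairs by induction, and the split sequence of Property \ref{property_blowup_formula} reduces everything to showing that $H_p(E_U)\to H_p(\tilde U)$ vanishes near $x$, which follows from principal triviality (Property \ref{property_local_principality}) because the exceptional divisor is locally a smooth principal divisor after removing a closed subset. Finally, note that this argument only treats curves $Z'$ flat over $\roi_K$ (henselianness of $\roi_K$ enters in the excision step), and the paper then invokes a quasi-isomorphism between the complex built from flat $1$-cycles and the full complex (\ref{equation_Gersten_complex}), as in \cite[Proof of Thm. 3.1]{Lu20}; this reduction to horizontal curves and the final comparison are also absent from your proposal.
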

\begin{proof}
We begin the proof in the more general setting  of arbitrary dimension $d$ which is why the theorem is not directly formulated for $d=2$. At some point, which we will highlight, we will have to assume that $d=2$. We hope that the method of the proof, and its limitations in arbitrary dimension, will nevertheless be be illuminating.

The surjectivity of ${\partial_0}$ follows from the fact that for every $x\in X^{(d)}$ we can choose a $y\in X^{(d-1)}$ such that $\overline{\{y\}}$ is regular and contains $x$. The boundary map $H^{q-d+1}(y,n-d+1)\xto{\partial_0} H^{q-d}(x,n-d)$ is then surjective by the lifting Property \ref{property_lifting}. This implies in particular that the map $(**)\colon\cal H^q_{Z^{d}}(n)\to \cal H^q_{Z^{d-1}}(n)$ is zero.

In order to show the exactness in the middle we fix $d$ and proceed by induction on $e$. The case $e=1$ was proved in Corollary \ref{theorem_smooth_case}. We wish to show that if the theorem holds for $e-1$, then it also holds for $e$. 
We change the notation slightly. From now on let $R=\roi_K[t_1,...,t_d]/(t_1\cdot...\cdot t_{e-1}-\pi)$ and $X=\Spec(R)$. Let $\pi_X\colon\tilde{X}\to X$ be the blow-up of $X$ along $V(I)$ for $I=t_{e-1}R+t_eR$. Then $\tilde{X}$ can be covered by the open sets
$$U_1=\Spec(\roi_K[t_1,...,t_d,V]/(t_1\cdot...\cdot t_{e-1}-\pi,t_eV-t_{e-1}))$$
and
$$U_2=\Spec(\roi_K[t_1,...,t_d,W]/(t_1\cdot...\cdot t_{e-1}-\pi,t_e-t_{e-1}W)).$$
Note that $U_1$ is exactly the object we want to study. Therefore if we show that the complex of sheaves (\ref{equation_Gersten_complex}) on $\tilde{X}$ is exact at all points $x\in U_1$, then we are done.

We fix a point $x\in U_1$. Let $Z'$ be a closed subscheme of $\tilde{X}$ of codimension $d-1$ which contains the point $x\in U_1$. 
Assume that $Z'$ is flat over $\roi_K$. 
We need to show that there is a closed subscheme $Z\subset \tilde{X}$ of codimension $\geq d-2$ containing $Z'$ and an open subset $U'\subset Z$ containing $x$ such that the composition 
$$H_q(Z')\to H_q(Z)\to H_q(U')$$ 
is zero. By our induction assumption we know that there is some $Y$ containing $\pi_X(Z')$ and some $U\subset Y$ such that the composition $H_q(\pi_X(Z'))\to H_q(Y)\to H_q(U)$ is zero. Note that $Z'=\tilde{\pi_X(Z')}$.
This sequence fits into the commutative diagram 
\[
  \xymatrix{ 
    & & 0 \ar[d] \\
  & & \tilde H_q(E_U) \ar[d] \\
 H_q(\tilde{\pi_X(Z')}) \ar[d] \ar[r] & H_q(\tilde{Y}) \ar[r] \ar[d] & H_q(\tilde{U}) \ar[d]
  \\
H_q(\pi_X(Z')) \ar[r] \ar@/_2pc/[rr]_0 & H_q(Y) \ar[r] & H_q(U) \ar[d]   \\
  & & 0
  }
\]
in which the right vertical sequence is exact by Property \ref{property_blowup_formula}. 

From here on we assume that $d=2$. Therefore 
$E_U=E_X$ and $E_U\cap U_1 \cong E_X\cap U_1\cong \bb A_k^{1}$. We choose a divisor $D\hookrightarrow X=\tilde{Y}$ such that $E_U-D\cong \bb A_k^{1}$ and consider the commutative diagram 
\[
  \xymatrix{ 
    & H_{q+1}(\tilde{U}-E_U-D) \ar@{->>}[d]   
  \\
 H_q(E_U)\ar[r] \ar[d] & H_q(E_U-D) \ar[d]^0   
  \\
 H_q(\tilde{U})\ar[r] & H_q(\tilde{U}-D)   
  }
\]
in which the lower right vertical map is zero by assumption up to enlarging $D$.
In sum, this shows that the morphism $Z'\hookrightarrow\tilde{X}$ is homologically effaceable at $x$. 
Next we show the same effaceability for any open subscheme $U$ of $\tilde{X}$ containing $x$ and any closed subscheme $Z'\subset U$ of dimension $1$ which is flat over $\Spec \roi_K$. The diagram 
\[
  \xymatrix{ 
 H_q(Z') \ar[d]_= \ar[r] \ar@/^2pc/[rr]^0 & H_q(\tilde{X}) \ar[r] \ar[d] & H_q(\tilde{U}-D) \ar[d]
  \\
H_q(Z') \ar[r]  & H_q(U) \ar[r] & H_q(U\cap (\tilde{U}-D)) 
  }
\]
commutes. The equality of the left vertical morphism comes from the fact that the closure of $Z'$ in $\tilde{X}$ is contained in $U$ since $Z'$ is flat over $\Spec \roi_K$ and $\roi_K$ is henselian.\footnote{In cohomological terms the equality on the left is the excision isomorphism $H^q_{Z'}(\tilde{X},n)\cong H^q_{Z'}(U,n)$.} The composition in the first row is zero by the above, implying the claim. This proves that the map
$$\cal H^q_{Z^{d-1,f}}\to \cal H^q_{Z^{d-2}},$$
where $$Z^{d-1,f}:=\{Z\subset X\mid Z\;\mathrm{ closed }, \;\mathrm{codim}_X(Z)\geq d-1,\dim(Z)=\dim(\mathrm{im}(Z)\subset \Spec(\roi_K))\},$$
is zero. 
We note that this implies that the complex
$$ \cal H^q_{Z^{d-2}/Z^{d-1,f}}\to \bigoplus_{x\in X^{(d-1),f}}i_{*,x}H^{q-d+1}(x,n-d+1)\xto{\partial_0} \bigoplus_{x\in X^{(d)}}i_{*,x}H^{q-d}(x,n-d)\to 0, $$
where $X^{(d-1),f}$ denotes the set of flat $1$-cycles, is exact.
We have already seen that the map
$$\cal H^q_{Z^{2}}(n)\to \cal H^q_{Z^1}(n)$$
is zero. It remains to show that the map
$$\cal H^q_{Z^{1}}(n)\to \cal H^q_{Z^0}(n)$$
are zero. This map fits into the exact sequence
$$\dots\to\cal H^q_{Z^{1}}(n)\to \cal H^q_{Z^0}(n)\to \cal H^{q}_{Z^0/Z^{1}}(n)\to \dots.$$
We show that the second map is injective on stalks. Let $x\in X$ be the singular point of the special fiber. Otherwise we are in the situation of Theorem \ref{theorem_smooth_case}. First note that $\cal H^q_{Z^0}(n)_x= H^q(\roi_{X,x},n)$ and $\cal H^q_{Z^0/Z^1}(n)_x= H^q(K(X),n)$. Let $\eta_1$ and $\eta_2$ be the two generic points of the special fiber. The first map in the exact sequence
$$\dots\to\cal H^q_{Z^{1}-\{\eta_1,\eta_2\}}(n)\xto{0} \cal H^q_{Z^0}(n)\to \cal H^{q}_{Z^0/(Z^{1}-\{\eta_1,\eta_2\})}(n)\to \dots$$ 
is zero by the above. It therefore suffices to show that the map $\cal H^{q}_{Z^0/(Z^{1}-\{\eta_1,\eta_2\})}(n)_x\to H^q(K(X),n)$ is injective. But $\cal H^{q}_{Z^0/(Z^{1}-\{\eta_1,\eta_2\})}(n)_x\cong H^q(R,n)$, where $R$ is the semi-localisation at the special fiber, and in particular $R_{t_1t_2}=K(X)$. Finally, the sequence 
$$ H^q(R,n)= \cal H^{q}_{Z^0/(Z^{1}-\{\eta_1,\eta_2\})}(n)\to  \cal H^{q}_{Z^0/(Z^{1}-\{\eta_1\})}(n)\to \cal H^q_{Z^0/Z^1}(n)_x= H^q(K(X),n), $$
in which the middle term is isomorhic to $H^q(R[\frac{1}{t_2}],n)$, is a sequence of injective maps by Property \ref{property_lifting}.
\end{proof}

\begin{remark}
The assumption on the effaceability of of $\bb A^1_k\to W$ is often satisfied due to the $\bb A^1$-invariance of \'etale cohomology combined with the cohomological dimension of $k$.
\end{remark}

\begin{remark}
In \cite[Thm. 9]{Sakagaito2020} Sakagaito, using a different method, proves that if $R$ is a mixed characteristic two-dimensional excellent regular henselian local ring and $\ell$ a positive integer which  is invertible in $R$, then the Gersten conjecture holds for \'etale cohomology with $\mu_\ell^{\otimes n}$ coefficients. Our proof of Theorem \ref{theorem_semi_stable} should be thought of as carrying the reduction to the dvr case in the proof of Theorem \ref{theorem_smooth_case} from the smooth to the semi-stable situation as far as possible. Finally, we would like to raise the question if an analogue of Gabber's representation lemma exists for regular semi-stable schemes in mixed characteristic.
\end{remark}

\bibliographystyle{acm}
\bibliography{Bibliografie} 

\noindent
\parbox{0.5\linewidth}{
\noindent
Morten L\"uders \\
Universität Heidelberg\\
Mathematisches Institut \\
Im Neuenheimer Feld 205 \\
69120 Heidelberg \\
Germany\\
{\tt mlueders@mathi.uni-heidelberg.de}
}
\end{document}